\documentclass[11pt, a4paper, german]{article}

\usepackage[english]{babel}
\usepackage{hyphsubst}
\usepackage{mathtools}
\usepackage{amssymb}
\usepackage{amsthm}
\usepackage{graphics} 
\usepackage{url}
\usepackage{hyperref}

\usepackage{graphicx}
\usepackage{caption}
\usepackage{subcaption}
\usepackage{color} 

\usepackage{ wasysym }

\usepackage{tikz}
\usepackage{amssymb,amsfonts,amsmath,MnSymbol}
\numberwithin{equation}{section}

\usepackage{tkz-euclide}
\usepackage{circuitikz}
\usepackage{xcolor}
\usetikzlibrary{arrows,calc,patterns}
\definecolor{hellgelb}{RGB}{255,255,170}
\definecolor{tuerkis}{RGB}{0,219,159}
\definecolor{hellgrau}{RGB}{141,141,141}
\definecolor{petrol}{RGB}{0,152,240}
\definecolor{neutrales_blau}{RGB}{0,102,183}
\definecolor{koralle}{RGB}{255,71,71}
\definecolor{gruen}{RGB}{37,179,40}
\definecolor{blasses_blau}{RGB}{84,155,226}

\DeclareFontShape{OT1}{cmss}{b}{n}{<->ssub * cmss/bx/n}{}
\usepackage{upgreek}

\usepackage[nottoc]{tocbibind}

\usepackage{xfrac}

\usepackage{xcolor}

\usepackage{amsthm}

\usepackage{float}

\usepackage{enumerate}

\usepackage[a4paper]{geometry}

\author{{Joscha Gedicke \thanks{Institut f\"ur Numerische Simulation, Universit\"at Bonn, 53115 Bonn (gedicke@ins.uni-bonn.de)} \quad and \quad Karen Petersen \thanks{Institut f\"ur Numerische Simulation, Universit\"at Bonn, 53115 Bonn (petersen@ins.uni-bonn.de)}
}}

\date{}

\title{Partition of unity a posteriori error estimators for the linear elasticity eigenvalue problem}

\begin{document}


\newtheorem{theorem}{Theorem}[section]
\newtheorem{corollary}[theorem]{Corollary}
\newtheorem{lemma}[theorem]{Lemma}
\newtheorem{definition}[theorem]{Definition}
\newtheorem{proposition}[theorem]{Proposition}
\newtheorem{example}[theorem]{Example}
\newtheorem*{remark}{Remark}
\newtheorem{problem}[theorem]{Problem}
\newtheorem{weak_formulation}[theorem]{Weak Formulation}
\newtheorem{discrete_problem}[theorem]{Discrete Problem}

\newcommand{\R}{\mathbb{R}}
\newcommand{\C}{\mathbb{C}}
\newcommand{\arund}{\mathnormal{a}}
\newcommand{\arundz}{\arund_z}
\newcommand{\aetaz}{\arund_{\etazew}}
\newcommand{\amuz}{\arund_{\muzew}}
\newcommand{\rot}{\text{\normalfont rot}\,}
\newcommand{\ufett}{\mathbf{u}}
\newcommand{\efett}{\mathbf{e}}
\newcommand{\taufett}{\text{\boldmath$\tau$}}
\newcommand{\phifett}{\text{\boldmath$\phi$}}
\newcommand{\nufett}{\text{\boldmath$\nu$}}
\newcommand{\ufetth}{\ufett_h}
\newcommand{\Hfett}{\mathbf{H}}
\newcommand{\Pcal}{\mathcal{P}}
\newcommand{\Pcalkl}{\Pcal_{k+\ell}}
\newcommand{\Pfett}{\mathbf{P}}
\newcommand{\xfett}{\mathbf{x}}
\newcommand{\vfett}{\mathbf{v}}
\newcommand{\vfetth}{\mathbf{v}_h}
\newcommand{\rfett}{\mathbf{r}}
\newcommand{\Efett}{\mathbf{E}}
\newcommand{\Efettj}{\Efett^{(j)}}
\newcommand{\wfett}{\mathbf{w}}
\newcommand{\Wfett}{\mathbf{W}}
\newcommand{\Wfettzl}{\Wfett_z^{\ell}}
\newcommand{\ffett}{\mathbf{f}}
\newcommand{\nfett}{\mathbf{n}}
\newcommand{\tfett}{\mathbf{t}}
\newcommand{\Vfett}{\mathbf{V}}
\newcommand{\Vfetth}{\Vfett_h}
\newcommand{\Vfettzl}{\Vfett_z^{\ell}}
\newcommand{\Lfett}{\mathbf{L}}
\newcommand{\nablafett}{\mathbf{\nabla}}
\newcommand{\straintensor}{\varepsilon(\ufett)}
\newcommand{\domain}{\Omega \subseteq \R^n}
\newcommand{\domainrtwo}{\Omega \subseteq \R^2}
\newcommand{\sigmau}{\text{\boldmath$\sigma$}}
\newcommand{\sigmauh}{\sigmau_h}
\newcommand{\divsigmau}{\mathbf{div}\,\sigmau}
\newcommand{\divsigmauh}{\mathbf{div}\,\sigmauh}
\newcommand{\divcepsu}{\mathbf{div} (\C \, \epsu)}
\newcommand{\divcepsuh}{\mathbf{div} (\C \, \epsuh)}
\newcommand{\eps}{\varepsilon}
\newcommand{\epsu}{\varepsilon(\ufett)}
\newcommand{\Cepsu}{\C \, \epsu}
\newcommand{\epsv}{\varepsilon(\vfett)}
\newcommand{\epsw}{\varepsilon(\wfett)}
\newcommand{\epsuh}{\varepsilon(\ufetth)}
\newcommand{\Cepsuh}{\C \, \epsuh}
\newcommand{\epsvh}{\varepsilon(\vfetth)}
\newcommand{\epse}{\varepsilon(\efett)}
\newcommand{\lambdak}{\lambda^{(k)}}
\newcommand{\kappaj}{\kappa^{(j)}}
\newcommand{\kappai}{\kappa^{(i)}}
\newcommand{\kappah}{\kappa_h}
\newcommand{\kappajh}{\kappaj_h}
\newcommand{\ufettj}{\ufett^{(j)}}
\newcommand{\ufetti}{\ufett^{(i)}}
\newcommand{\ufettjh}{\ufettj_h}
\newcommand{\Ttilde}{\tilde{T}}
\newcommand{\TaufH}{\Ttilde|_{\Hfett}}
\newcommand{\Tbar}{\bar{T}}
\newcommand{\TbarH}{\Tbar|_{\Hfett}}
\newcommand{\traceop}{\gamma_{\nfett}}
\newcommand{\Hfettn}{\Hfett_{\nfett}}
\newcommand{\phiz}{\varphi_z}
\newcommand{\Tcal}{\mathcal{T}}
\newcommand{\Ecal}{\mathcal{E}}
\newcommand{\Ncal}{\mathcal{N}}
\newcommand{\tr}{\text{tr}}
\newcommand{\bilform}{\arund(\cdot,\cdot)}
\newcommand{\bilformtilde}{\atilde(\cdot,\cdot)}
\newcommand{\bilformtildez}{\atildez(\cdot,\cdot)}
\newcommand{\bilformbar}{\bar{\arund}(\cdot,\cdot)}
\newcommand{\clement}{\mathtt{I}_h}
\newcommand{\CKorn}{C_{\operatorname{Kornelast}}}
\newcommand{\CWKorn}{C_{\operatorname{WKorn}}}
\newcommand{\CPF}{C_{\operatorname{PF}}}
\newcommand{\RBM}{\operatorname{RM}}
\newcommand{\Young}{\operatorname{Young}}
\newcommand{\CS}{\operatorname{C.S.}}

\newcommand{\GammaD}{\Gamma_D}
\newcommand{\Gammag}{\Gamma_g}
\newcommand{\GammaN}{\Gamma_N}

\newcommand{\intomega}{\int_{\Omega} \,}
\newcommand{\intrandomega}{\int_{\partial \Omega} \,}
\newcommand{\intrandomegaD}{\int_{\GammaD} \,}
\newcommand{\intrandomegag}{\int_{\Gammag} \,}
\newcommand{\intrandomegaN}{\int_{\GammaN} \,}

\newcommand{\mathbbm}[1]{\text{\usefont{U}{bbm}{m}{n}#1}}

\newcommand{\vectorxy}{\begin{pmatrix} x \\ y  \end{pmatrix}}
\newcommand{\vectorrot}{\begin{pmatrix} -y \\ x  \end{pmatrix}}
\newcommand{\eoneintwod}{\begin{pmatrix} 1 \\ 0  \end{pmatrix}}
\newcommand{\etwointwod}{\begin{pmatrix} 0 \\ 1  \end{pmatrix}}
\newcommand{\nullfett}{\mathbf{0}}
\newcommand{\einsfett}{\mathbf{1}}
\newcommand{\vectorxyz}{\begin{pmatrix} x \\ y \\ z \end{pmatrix}}
\newcommand{\eoneinthreed}{\begin{pmatrix} 1 \\ 0 \\ 0 \end{pmatrix}}
\newcommand{\etwointhreed}{\begin{pmatrix} 0 \\ 1 \\ 0 \end{pmatrix}}
\newcommand{\ethreeinthreed}{\begin{pmatrix} 0 \\ 0 \\ 1 \end{pmatrix}}
\newcommand{\into}{\hookrightarrow}

\newcommand{\etalew}{\eta}
\newcommand{\mulew}{\mu}
\newcommand{\etazew}{\eta_z}
\newcommand{\etazewh}{\eta_{z,h}}
\newcommand{\muzew}{\mu_z}
\newcommand{\muzewh}{\mu_{z,h}}
\newcommand{\etares}{\eta_{res}}
\newcommand{\Azeta}{A_z}
\newcommand{\Azmu}{\tilde{A}_z}

\maketitle

\begingroup
\renewcommand{\thefootnote}{}
\footnotetext{The authors gratefully acknowledge the granted access to the Bonna cluster hosted by the University of Bonn.}
\endgroup

\begin{abstract}
    In this paper we consider the linear elasticity eigenvalue problem with different boundary conditions and examine two a posteriori error estimators,
    that compute solutions to local residual problems using a partition of unity and localized residuals.
    These a posteriori error estimators are known for the source problem, and in this paper 
    we investigate their reliability and efficiency for the given eigenvalue problem with Dirichlet, Neumann and gliding boundary conditions.
    In numerical experiments we test the performance of the estimators on different domains, with different boundary conditions, and with different polynomial degrees. 
    The numerical experiments show efficiency indices closer to one compared to the standard residual-based a posteriori error estimator.
\end{abstract}

\section{Introduction}
    
    In this paper we examine a posteriori error estimators based on solving local problems in primal formulation based on a partition of unity approach for the following eigenvalue problem of linear elasticity.

    Let $\domainrtwo$ be a Lipschitz domain with polygonal boundary $\partial \Omega$. 
    We denote the Cauchy stress tensor as $\sigmau \coloneqq \C \, \epsu,$
    where $\C$ is the stiffness tensor, $\epsu \coloneqq \frac{1}{2} (\nabla \ufett + \nabla \ufett^T)$ is the linear strain tensor, and $\ufett$ is the displacement field.
    We partition $\partial \Omega$ into three parts: the closed Dirichlet part $\GammaD$, the closed gliding part $\Gammag$ and the open Neumann part $\GammaN \coloneqq \partial \Omega \setminus \big( \GammaD \cup \Gammag \big)$.
    Note that each of these parts should be connected and they should not overlap in more than a vertex.
    Furthermore, up to two of these boundary parts may be empty sets.
    
    Consider the eigenvalue problem: Find an eigenpair $(\kappa, \ufett) \in \R_{\geq 0} \times C^2(\Omega,\R^2) \cap C^1(\overline{\Omega},\R^2)$ such that
    \begin{alignat*}{2}
        - \divsigmau &= \kappa \, \ufett \quad &&\text{ in } \Omega, \\
        \sigmau \; \nfett &= \nullfett &&\text{ on } \GammaN, \\ 
        \ufett  &= \nullfett &&\text{ on } \GammaD,  \\ 
        \ufett \cdot \nfett = 0 \ \text{and} \ \tfett^T\; \sigmau \; \nfett &= 0 &&\text{ on } \Gammag,
    \end{alignat*}
    where $\nfett$ is the outer unit normal vector and $\tfett$ is the unit tangent vector. For simplicity we assume throughout this paper that the eigenvalue $\kappa$ is simple.

    A posteriori error estimators for the linear elasticity eigenvalue problem have previously been studied, e.g. in \cite{cg_2011}.
    They consider symmetric eigenvalue problems with Dirichlet boundary conditions in a general formulation and focus on first-order finite element methods.
    Two estimators are presented, where one focuses on edge contributions and the other is based on averaging techniques.
    In \cite{MR_A_priori_posteriori_error_estimates} an a posteriori error estimator is computed using virtual element methods.
    The mixed formulation was studied e.g.\ in  \cite{LRV_a_posteriori_analysis_for_mixed_FEM} with Dirichlet boundary conditions, where an a posteriori error estimator is proposed and its reliability and efficiency are proven. They also include a postprocessing.
    Bounds on the eigenvalues were computed, e.g.\ by \cite{LPZ_guaranteed_two_sided_bounds}, where they study two-sided bounds for elliptic systems, including results for linear elasticity eigenvalues. \\
    
    In this work we examine a posteriori error estimators based on solving local problems.
    These local problems are stated in displacement formulation and therefore their solutions may only be approximated with finite elements of higher polynomial degrees. 
    Our numerical examples illustrate that the mere increase of the polynomial degree by one is sufficient in all conducted numerical experiments. The use of the local displacement formulation allows for much simpler finite elements than a local mixed formulation, since mixed finite elements for linear elasticity problems are much more complicated. 
    
    The main idea for the estimators inspired by \cite{BR_err_est_for_AFEM_comp} is the use of a partition of unity to localize the residuals. This idea has first been derived for the Poisson problem in \cite{CF_fully_reliable_localized_err_control_in_the_FEM} and then been extended to the linear elasticity boundary value problem in \cite{error_estimator_pou}.  
    
    In \cite{error_estimator_pou}, the two estimators $\etalew$ and $\mulew$ are defined locally
        \begin{align*}
    	   \etazew &\coloneqq \sup_{\vfett \in \Hfett, \, \|\phiz^{\sfrac{1}{2}}\, D\vfett\|_z \neq 0} \ \frac{Res_z (\vfett)}{\|\phiz^{\sfrac{1}{2}}\, D\vfett\|_z} \\
    	    \muzew &\coloneqq \sup_{\vfett \in \Hfett, \ \|\phiz^{\sfrac{1}{2}} \, \C^{\sfrac{1}{2}} \, \varepsilon(\vfett)\|_z  \neq 0} \ \frac{Res_z (\vfett)}{\|\phiz^{\sfrac{1}{2}} \, \C^{\sfrac{1}{2}} \, \varepsilon(\vfett)\|_z },
        \end{align*}
        with $\Hfett$ being the solution space defined in Definition \ref{def: Hfett}, $Res_z$ being a local residual defined on a patch $\omega_z$, $\|\cdot\|_z$ being the energy norm and $\phiz$ the hat function on that patch, all specified in Section \ref{subsection: def residual}.
    	Summing up these local estimators over all nodes $\Ncal$ we obtain the global estimators
       \begin{equation*}
           \etalew^2 \coloneqq \sum_{z \in \Ncal} \etazew^2 \qquad \text{and} \qquad  \mulew^2 \coloneqq \sum_{z \in \Ncal} \muzew^2.
       \end{equation*}
    
    For the eigenpairs $(\kappa, \ufett)$ and $(\kappah, \ufetth)$ of the continuous and discrete eigenvalue problems we prove (global) reliability of the two estimators
        \begin{align*}
             |||\ufett - \ufetth|||  &\leq \sqrt{3} \, \CKorn \, \etalew + \frac{\kappa + \kappah}{2 \, \sqrt{\kappa}} \, \|\ufett - \ufetth\|_0, \\
             |||\ufett - \ufetth||| &\leq \sqrt{3} \, \mulew + \frac{\kappa + \kappah}{2 \, \sqrt{\kappa}} \, \|\ufett - \ufetth\|_0,
        \end{align*}
    in Theorems \ref{thm: reliability of etalew} and \ref{thm: reliability of mulew}, respectively, where $|||\cdot|||$ is the energy norm and $\CKorn$ is a constant depending solely on $\Omega$.
    
    Both estimators are also proven to be locally efficient
    \begin{align*}
        \etazew &\leq \Azeta \|\C^{\sfrac{1}{2}}\epse\|_z + \CPF \, h_z^2 \, \left( |\kappa - \kappah|   + \kappa \, \|\ufett - \ufetth\|_z \right), \\
        \muzew &\leq \Azmu \| \C^{\sfrac{1}{2}} \epse \|_z + C_z \, h_z^2 \, \left( |\kappa - \kappah|   + \kappa \, \|\ufett - \ufetth\|_z \right),
    \end{align*}
    for some local patch constants $\Azeta, \Azmu$ depending on the critical Lam\`e parameter $\lambda$, a constant $C_z$ not critical in $\lambda\to\infty$,  the mesh size $h_z$, and the Poincar\`e-Friedrich constant $\CPF$, which are specified in Theorems \ref{thm: efficiency of etalew} and \ref{thm: efficiency of mulew}, respectively.\\

    The remainder of the paper is organized as follows.
    In Section \ref{section: Weak formulation} we define the problem, derive a weak formulation and a discrete problem for all boundary conditions and mention some a priori analysis results.
    Afterwards, in Section \ref{section: A posteriori error analysis}, we start by introducing necessary notations and definitions for the estimators, including the residual and its local version together with the chosen partition of unity.
    We then define first $\etalew$ and then $\mulew$, show their reliability and efficiency and give a brief overview on how to compute the estimators in practice.
    Finally, the results of our numerical experiments are presented in Section \ref{section: experiments}.
    

    \section{Weak formulation}
    \label{section: Weak formulation}
    
    We start by defining a space $\Hfett$ that differs depending on the boundary condition we consider.
    \begin{definition}
        \label{def: Hfett}
        The solution space $\Hfett$ is given by
        \begin{equation*}
            \Hfett \coloneqq \left\{ \ufett \in H^1(\Omega, \, \R^2): \ \begin{array}{rl} \ufett  &\!\!\!\!= \nullfett \text{ on } \GammaD,  \\ \ufett \cdot \nfett & \!\!\!\!= 0 \text{ on } \Gammag \end{array} \right\}.
        \end{equation*}
    \end{definition}
    Using this space we derive a weak formulation.
    
    Integration by parts leads to
    
    \begin{equation}
        \label{eq: weak formulation mit Randintegralen}
        \intomega \sigmau : \epsv \; dx = \intomega \kappa \; \ufett \cdot \vfett \; dx + \intrandomega  (\sigmau \; \nfett) \cdot \vfett \; ds 
    \end{equation}
    for all $\vfett \in \Hfett$.
    
    Using the three boundary parts $\partial \Omega = \GammaN \cup \GammaD \cup \Gammag$ we examine their respective contributions to the boundary integral in \eqref{eq: weak formulation mit Randintegralen}.
    
    For Neumann boundary conditions we require $\vfett$ to be traciton-free, i.e.\ $\sigmau \, \nfett = \nullfett$ on $\GammaN$.
   
    The Dirichlet boundary condition is incorparated in $\Hfett$.
    Hence,
    \begin{equation*}
        \intrandomega (\sigmau \; \nfett) \cdot \vfett \; ds = \intrandomegag (\sigmau \; \nfett) \cdot \vfett \; ds .
    \end{equation*}
    
    We use the gliding boundary conditions $\vfett \cdot \nfett = 0$ and $\tfett^T\; \sigmau \; \nfett = 0$ on $\Gammag$ to model contact without friction.
   
    The first equation is called \emph{contact boundary condition} and the second one \emph{no friction boundary condition}, respectively.
    Decompositing $\vfett$ into a tangential and a normal component we obtain
    \begin{align*}
        &\quad \, \intrandomegag (\sigmau \; \nfett) \cdot \vfett \; ds = \intrandomegag (\sigmau \; \nfett) \cdot \big( (\tfett \cdot \vfett) \, \tfett + (\nfett \cdot \vfett) \, \nfett \big) \; ds \\
        &=  \intrandomegag (\tfett \cdot \vfett) \; (\sigmau \; \nfett) \cdot \tfett \; ds =  \intrandomegag (\tfett \cdot \vfett) \; \tfett^T \, \sigmau \; \nfett \; ds = 0.
    \end{align*}
    
    Overall, the integral over $\partial \Omega$ in \eqref{eq: weak formulation mit Randintegralen} vanishes on all three boundary parts.
    Next, we state the weak formulation.

    \begin{weak_formulation}
        \label{formulation: weak eigenv pb formul}
        Find $\ufett \in \Hfett$ and $\kappa \in \R_{\geq 0}$ such that $\|\ufett\|_0 = 1$ and
        \begin{equation}
            \label{eq: weak formulation a(u,v) = (ku,v)+rand}
            \arund(\ufett,\vfett) = (\kappa \,\ufett, \vfett)_{0}
        \end{equation}
        for all $\vfett \in \Hfett$, where the bilinear form $\arund: \Hfett \times \Hfett \to \R$ is defined as
        \begin{equation*}
            \arund(\ufett,\vfett) = \intomega \sigmau : \epsv \; dx,
        \end{equation*}
        $\sigmau \coloneqq \Cepsu$, and $(\cdot, \cdot)_{0}$ denotes the $\Lfett^2(\Omega,\R^2)$-inner product with the corresponding norm $\| \cdot \|_0$.
        
    \end{weak_formulation}
    
    For the discrete problem the space of piecewise polynomials is given by
        \begin{align*}
            \Pcal_k(\Tcal) &\coloneqq \left\{ \ufett \in L^2(\Omega,\R^2): \ \ufett_i|_T, \ i=1,2, \text{ is a polynomial} \right. \\
            &\quad\quad \left. \text{of total degree at most } k \text{ for all } T \in \Tcal \right\}.
        \end{align*}
        The discrete solution space is then defined as
        \begin{equation*}
            \Hfett_h \coloneqq \Hfett \cap \Pcal_k(\Tcal),
        \end{equation*}
        and the discrete problem follows from the continuous one in \eqref{eq: weak formulation a(u,v) = (ku,v)+rand}.
    \begin{discrete_problem}
        \label{formulation: discrete weak eigenv pb formul}
        Find $\ufetth \in \Hfett_h$ and $\kappah \in \R_{\geq 0}$ such that $\| \ufetth \|_0 =1$ and
        \begin{equation}
            \label{eq: diskr weak formulation a(u,v) = (ku,v)+rand}
            \arund(\ufetth,\vfetth) = (\kappah \ufetth, \vfetth)_{0}
        \end{equation}
        for all $\vfett_h \in \Hfett_h$, where
        \begin{equation*}
            \arund(\ufetth,\vfetth) = \intomega \sigmauh : \epsvh \; dx,
        \end{equation*}
        and $\sigmauh \coloneqq \Cepsuh$.
    \end{discrete_problem}
    
    It is well established that \eqref{eq: weak formulation a(u,v) = (ku,v)+rand} and \eqref{eq: diskr weak formulation a(u,v) = (ku,v)+rand} yield positive eigenvalues $\kappa,\kappah \geq 0$ which can be ordered increasingly.
    As for the Laplace eigenvalue problem in \cite{boffi}, we have a priori convergence rates given a regularity parameter $s > 1$ such that $\ufett \in H^{s}(\Omega)$
    \begin{align}
        \label{eq: a priori convergence rates}
        ||| \ufett - \ufetth ||| &= \mathcal{O}(h^{ \min (k, s-1)}), \notag \\ 
        \| \ufett - \ufetth \|_0 &= \mathcal{O}(h^{\min(1,s-1) + \min (k, s-1))}),\\
        |\kappa - \kappah| &= \mathcal{O}(h^{2 \min (k, s-1)}). \notag
    \end{align}
    In particular, the eigenvalues converge with twice the rate compared to the eigenfunctions.
    Due to the Rayleigh-Ritz principle we know that $\kappah \geq \kappa$.



\section{A posteriori error analysis}
\label{section: A posteriori error analysis}

    Throughout this paper we use the following notation.
    We assume that $\Tcal$ is a regular, quasi-uniform triangulation of our Lipschitz domain $\domainrtwo$, i.e.\ $\bigcup_{T \in \Tcal} T = \Omega \cup \partial \Omega$ with mesh size at most $h$.
    By $\Ecal$ we denote the set of all edges and by $\Ncal$ the set of all vertices of triangles in $\Tcal$.
    Adding the sub-indices $\Omega$, $N$, $D$ or $g$ describes inner, Neumann, Dirichlet or gliding edges and nodes, respectively.
    We use $T$ to describe triangles, $E$ for edges and $z$ for nodes.
    The mesh size of a specific triangle $T$ is denoted by $h_T$ and the mesh size of a specific edge is denoted by $h_E$, where due to the shape regularity of the triangulation we have that $h_T \approx h_E$.
    The abbreviations $x \lesssim y$ and $x \approx y$ stand for $x \leq c_1 \, y$ and $c_2 \, x \leq y \leq c_3 \, x $ respectively, with $c_1, c_2, c_3> 0$ some constants independent of $h$.


\subsection{Residual for the eigenvalue problem}
\label{subsection: def residual}
    
    Using the notation from above we now define the residual from which the error estimators are derived.

    Let $(\kappah, \ufetth)$ be the eigenpair to the Discrete Problem \ref{formulation: discrete weak eigenv pb formul}.
    Then the corresponding residual is defined as
    \begin{equation}
        Res_h:\Hfett \to \R,\quad
        \label{eq: def residual for eigenvalue problem}
        Res_h(\cdot) \coloneqq \kappah (\ufetth, \cdot)_0 - \arund(\ufetth, \cdot).
    \end{equation}
    Hence, $Res_h \in \Hfett^*$ is a linear functional with $Res_h(\vfetth) = 0$ for all $\vfetth \in \Hfett_h$.

    For some $\vfett \in \Hfett$ we rewrite the residual using integration by parts as
    \begin{equation}
        \label{eq: Resh in R and J}
        Res_h(\vfett) = \int_{\Omega} (\kappah \ufetth + \divsigmauh) \cdot \vfett \, dx - \sum_{E \in \mathcal{E}} \int_E ([\sigmauh] \; \nfett_E) \cdot \vfett \, ds
    \end{equation}
    where 
    \begin{align*}
	       [\sigmauh] \; \nfett_E
	       \coloneqq \begin{cases}
	           \nullfett &\ \text{for } E \in \Ecal_D, \\
	           \sigmauh \; \nfett_E \ &\ \text{for } E \in \Ecal_N, \\
	           \sigmauh \; \nfett_E &\ \text{for } E \in \Ecal_g, \\
	           (\sigmauh^+ |_{T_+}  - \ \sigmauh^- |_{T_-}) \; \nfett_E &\ \text{for } E=\partial T_+ \cap \partial T_- \in \Ecal_{\Omega},
	       \end{cases}
	\end{align*}
    denotes the normal jump of $\sigmauh = \C \, \epsuh$ for a fixed edge normal $\nfett_E$.
    
    \begin{remark}
        We see that for the jump of gliding edges $E \in \Ecal_g$, rewriting $\vfett \in \Hfett$ using the tangential and normal components and using the contact boundary condition $\vfett \cdot \nfett = 0$ yields
        \begin{align*}
            &\int_E  ([\sigmauh] \; \nfett_E) \cdot \vfett \, ds
            = \int_E (\sigmauh \; \nfett_E) \cdot \big( (\vfett \cdot \tfett_E) \, \tfett_E + (\vfett \cdot \nfett_E) \, \nfett_E \big) \, ds \\
            &= \int_E (\sigmauh \; \nfett_E) \cdot \big( (\vfett \cdot \tfett_E) \, \tfett_E \big) \, ds 
            = \int_E \tfett_E^T \, \sigmauh \, \nfett_E \, (\vfett \cdot \tfett_E) \, ds,
        \end{align*}
        which corresponds to the weakly incorporated no friction boundary condition $\tfett^T\; \sigmau \; \nfett = 0$.
    \end{remark}
    
    For the definition of the estimators we need two different norms of the residual.
    We define the \emph{dual norms} of the residual with respect to the energy norm $|||\cdot||| \coloneqq \bilform^{\sfrac{1}{2}}$ and the $\Hfett^1$-semi norm $\|D\cdot\|_0$ as
    \begin{equation}
        \label{eq: def dual norm res}
	   |||Res_h|||_{*} := \sup_{\substack{\vfett \in \Hfett \\ |||\vfett||| \neq 0}} \frac{Res_h(\vfett)}{|||\vfett|||}
	   \quad \text{and} \quad |||Res_h|||_{-1} := \sup_{\substack{\vfett \in \Hfett \\ \|D\vfett\|_0 \neq 0}} \frac{Res_h(\vfett)}{\|D\vfett\|_0},
    \end{equation}
    respectively.

    To connect these two norms, consider the following Korn's inequality from \cite[Theorem 11.2.12]{brenner_scott} for all $\vfett \in \Hfett$ with $|||\vfett||| \neq 0$
    \begin{equation*}
        \|D \vfett\|_0 \leq C_{\text{Kornelast}}\, |||\vfett|||,
    \end{equation*}
    where $C_{\text{Kornelast}}>0$ is a constant depending solely on $\Omega$.
    As the kernel of $||| \cdot |||$ coincides with the space of rigid body motions $\RBM (\Omega)$, the condition $||| \vfett ||| \neq 0$ implies $\|D \vfett\|_0 \neq 0$ and the given suprema \eqref{eq: def dual norm res} yield
    \begin{equation}
        \label{eq: resh dual norm korn elast leq resh -1 norm}
        |||Res_h|||_{*} \leq C_{\text{Kornelast}}\, |||Res_h|||_{-1}.
    \end{equation}


    To connect the dual norms to the error, we need the following equations.
    
    \begin{proposition}[{\cite[Lemma 6.3]{strang_fix}},{\cite[Lemma 3.1]{cg_2011}}]
        \label{lemma: gedicke L3.1 Teil 1}
        Let $(\kappa, \ufett)$ and $(\kappa_h, \ufett_h)$ be the eigenpairs of the continuous and the discrete eigenvalue problems \eqref{eq: weak formulation a(u,v) = (ku,v)+rand} and \eqref{eq: diskr weak formulation a(u,v) = (ku,v)+rand}, respectively.
        Then the energy norm of the error $\efett = \ufett - \ufett_h$ can be written as
        \begin{equation}
            \label{eq: CG Paper L3.1  eq 1}
            |||\efett|||^2 = \kappa \, \|\efett\|_0^2 + \kappa_h - \kappa.
        \end{equation}
    \end{proposition}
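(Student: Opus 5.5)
The identity follows by expanding $|||\efett|||^2 = \arund(\efett,\efett)$ bilinearly and using the two eigenvalue equations together with the normalizations $\|\ufett\|_0 = \|\ufetth\|_0 = 1$. The expansion gives
\begin{equation*}
    |||\efett|||^2 = \arund(\ufett,\ufett) - 2\,\arund(\ufett,\ufetth) + \arund(\ufetth,\ufetth).
\end{equation*}
By symmetry of $\arund(\cdot,\cdot)$ (since $\C$ is symmetric), the mixed terms combine into a single term.

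\textbf{Step 1: the diagonal terms.} Test \eqref{eq: weak formulation a(u,v) = (ku,v)+rand} with $\vfett = \ufett \in \Hfett$ to get $\arund(\ufett,\ufett) = \kappa\,\|\ufett\|_0^2 = \kappa$. Test \eqref{eq: diskr weak formulation a(u,v) = (ku,v)+rand} with $\vfetth = \ufetth \in \Hfett_h$ to get $\arund(\ufetth,\ufetth) = \kappah$.

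\textbf{Step 2: the mixed term.} This is the only point needing care. We may test the continuous equation with $\vfett = \ufetth$, which is admissible because $\Hfett_h \subseteq \Hfett$ (conformity). This gives $\arund(\ufett,\ufetth) = \kappa\,(\ufett,\ufetth)_0$. The continuous eigenvalue $\kappa$ must appear here, not $\kappah$; that is what produces the stated form.

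\textbf{Step 3: rewrite the $L^2$ inner product.} Using the normalization,
\begin{equation*}
    2\,(\ufett,\ufetth)_0 = \|\ufett\|_0^2 + \|\ufetth\|_0^2 - \|\efett\|_0^2 = 2 - \|\efett\|_0^2 .
\end{equation*}

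\textbf{Step 4: combine.} Substituting Steps 1--3,
\begin{equation*}
    |||\efett|||^2 = \kappa - \kappa\,(2 - \|\efett\|_0^2) + \kappah = \kappa\,\|\efett\|_0^2 + \kappah - \kappa,
\end{equation*}
which is \eqref{eq: CG Paper L3.1  eq 1}.

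The argument is routine and has no real obstacle. The points to watch are the conformity $\Hfett_h \subseteq \Hfett$ needed in Step 2, and choosing the sign of $\ufetth$ consistently relative to $\ufett$. The identity holds for either sign choice, since both eigenfunctions are normalized.
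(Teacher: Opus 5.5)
Your proof is correct and is the standard argument behind the results the paper cites (Strang--Fix and Carstensen--Gedicke). The paper itself gives no proof of this identity and only cites those references. The argument expands $\arund(\efett,\efett)$ and evaluates the diagonal terms with the two eigenvalue equations. The mixed term is handled by testing the continuous equation with $\ufetth \in \Hfett_h \subseteq \Hfett$ and using the symmetry of $\arund$. Finally, $2(\ufett,\ufetth)_0 = 2 - \|\efett\|_0^2$ follows from the normalizations.
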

    
    \begin{proposition}[{\cite[Lemma 3.1, 3.2]{dpr}},{\cite[Lemma 3.1]{cg_2011}}]
        \label{lemma: gedicke L3.1 Teil 2}
        Using the same assumptions as in Proposition \ref{lemma: gedicke L3.1 Teil 1}, it follows that
        \begin{equation*}
            |||\efett|||^2 = \frac{(\kappa + \kappa_h)}{2} \, \|\efett\|_0^2 + Res_h(\efett).
        \end{equation*}
    \end{proposition}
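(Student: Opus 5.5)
We prove the identity by expanding $Res_h(\efett)$ with the definition \eqref{eq: def residual for eigenvalue problem} and the continuous equation \eqref{eq: weak formulation a(u,v) = (ku,v)+rand}, and then removing the remaining terms with Proposition \ref{lemma: gedicke L3.1 Teil 1} and the normalizations $\|\ufett\|_0=\|\ufetth\|_0=1$.

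\emph{Step 1: expand the residual.} Since $\efett = \ufett-\ufetth \in \Hfett$, we have
\begin{equation*}
Res_h(\efett) = \kappah(\ufetth,\efett)_0 - \arund(\ufetth,\efett).
\end{equation*}
Write $\arund(\ufetth,\efett) = \arund(\ufett,\efett) - \arund(\efett,\efett)$. Testing \eqref{eq: weak formulation a(u,v) = (ku,v)+rand} with $\vfett=\efett$ gives $\arund(\ufett,\efett) = \kappa(\ufett,\efett)_0$. Hence
\begin{equation*}
Res_h(\efett) = |||\efett|||^2 + \kappah(\ufetth,\efett)_0 - \kappa(\ufett,\efett)_0 .
\end{equation*}

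\emph{Step 2: evaluate the $L^2$ inner products.} The normalizations give
\begin{equation*}
(\ufett,\efett)_0 = 1-(\ufett,\ufetth)_0, \qquad (\ufetth,\efett)_0 = (\ufett,\ufetth)_0-1 .
\end{equation*}
Moreover $\|\efett\|_0^2 = 2-2(\ufett,\ufetth)_0$, so
\begin{equation*}
(\ufett,\efett)_0 = \tfrac12\|\efett\|_0^2, \qquad (\ufetth,\efett)_0 = -\tfrac12\|\efett\|_0^2 .
\end{equation*}
Substituting these into Step 1 yields
\begin{equation*}
Res_h(\efett) = |||\efett|||^2 - \frac{\kappa+\kappah}{2}\,\|\efett\|_0^2 ,
\end{equation*}
which rearranges to the claimed identity.

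Proposition \ref{lemma: gedicke L3.1 Teil 1} is therefore not needed; it serves only as a consistency check. Substituting it into the claim, the claim becomes $Res_h(\efett) = \frac{\kappa-\kappah}{2}\|\efett\|_0^2 + \kappah-\kappa$. This can be verified directly from $Res_h(\ufetth)=0$ together with $Res_h(\ufett) = \kappah(\ufetth,\ufett)_0 - \kappa(\ufett,\ufetth)_0$.

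The argument uses only the continuous equation, the normalizations, and bilinearity, so it is independent of the boundary conditions encoded in $\Hfett$. The only delicate point is the sign bookkeeping in Step 2.
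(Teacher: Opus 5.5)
Your computation is correct and complete. Testing the continuous problem with $\efett\in\Hfett$, writing $\arund(\ufetth,\efett)=\arund(\ufett,\efett)-|||\efett|||^2$, and using $(\ufett,\efett)_0=\tfrac12\|\efett\|_0^2=-(\ufetth,\efett)_0$ yields exactly the claimed identity.

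The paper gives no proof of its own and only cites \cite{dpr} and \cite{cg_2011}, where the statement appears together with Proposition~\ref{lemma: gedicke L3.1 Teil 1}. In that combined setting, the other natural derivation is the one you relegate to a consistency check, namely
\[
Res_h(\efett)=Res_h(\ufett)=(\kappah-\kappa)\,(\ufett,\ufetth)_0=(\kappah-\kappa)\bigl(1-\tfrac12\|\efett\|_0^2\bigr),
\]
followed by substitution of Proposition~\ref{lemma: gedicke L3.1 Teil 1}. That route uses the discrete equation twice: through $Res_h(\ufetth)=0$, and through $|||\ufetth|||^2=\kappah$ inside Proposition~\ref{lemma: gedicke L3.1 Teil 1}. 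It also uses the symmetry of $\arund$. In exchange, it exhibits $Res_h(\efett)$ explicitly as a quantity of the size of the eigenvalue error.

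Your direct route needs none of these ingredients. It thereby shows that the identity is purely algebraic: it holds for every $\ufetth\in\Hfett$ with $\|\ufetth\|_0=1$ and every real $\kappah$ used in the definition of $Res_h$, independently of the boundary conditions.
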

    
    As a consequence, we have that
    \begin{equation}
        |||\efett|||^2 \geq \kappah - \kappa,
        \quad 
        \frac{1}{\sqrt{\kappa}} \, |||\efett||| \geq  \|\efett\|_0
        \label{eq: gedicke L3.1 Teil 2}
        \quad \text{and} \quad
        |||\efett||| \leq \frac{\kappa + \kappah}{2 \, \sqrt{\kappa}} \, \|\efett\|_0 + |||Res_h|||_*.
    \end{equation}
    
    These estimates are needed in the proofs of reliability for both estimators.


    To derive the error estimators we introduce a partition of unity.
    
    First we define the hat functions $\{\phiz\}_{z \in \Ncal}$ for some node $z \in \Ncal$ as
    \begin{equation*}
        \phiz: \Omega \to \R, \quad \phiz(x) \coloneqq \begin{cases}
        1 \quad \text{if } x = z \in \Ncal,\\
        0 \quad \text{if }x \in \Ncal \setminus \{z\},
        \end{cases} 
    \end{equation*}
    and in between the nodes we interpolate linearly over every element $T \in \Tcal$ such that $\phiz \in \Pcal_1$ is a finite element function of degree one with $L^{\infty}$-norm $\| \phiz \|_{\infty} = 1$.\\
    The patch $\omega_z$ is defined as an open set
    \begin{equation*}
        \omega_z \coloneqq \{ x \in \Omega \, | \, \phiz(x) \neq 0 \}
    \end{equation*}
    with diameter $h_z$.
    By definition, the closure of $\omega_z$ is
    \begin{equation*}
        \bar{\omega}_z = \bigcup_{T \in \Tcal(z)} T \quad \text{with } \Tcal(z) \coloneqq \{T \in \Tcal \, | \, z \in T\}.
    \end{equation*}
    The set of edges connected to $z$ is denoted by $\Ecal_z$.

    Next, we apply this partition of unity to the residual to localize it.
    Let $\vfett \in \Hfett$ and let $z \in \Ncal$ be a node with hat function $\phiz$ on the corresponding patch $\omega_z$.
    We define the local residual as
      \begin{align}
        \label{eq: Definition Res_z}
        Res_z(\vfett) &\coloneqq Res_h(\phiz \vfett) \\
        \notag
        &\overset{\eqref{eq: Resh in R and J}}{=}
           \int_{\Omega} \phiz \, (\kappah \ufetth + \divsigmauh) \cdot \vfett \, dx - \sum_{E \in \mathcal{E}} \int_E \phiz \, ([\sigmauh] \; \nfett_E) \cdot \vfett \, ds ,
      \end{align}
      where we inserted the definition of the global residual \eqref{eq: Resh in R and J}.

    We also need energy and $\Lfett^2$-norms on these local patches. 
    On a fixed patch $\omega_z$ we define the $\Lfett^2(\omega_z)$-inner product and norm as
    \begin{equation*}
        (\vfett,\wfett)_z \coloneqq \int_{\omega_z} \vfett \cdot \wfett \, dx    \quad \text{ and } \quad
        \|\vfett\|_z^2 \coloneqq (\vfett,\vfett)_z.
    \end{equation*}
    Similarly, we define the local bilinear form $\arundz: \Hfett \times \Hfett \to \R$ and its local energy norm as
    \begin{equation*}
        \arundz(\ufett,\vfett) \coloneqq  (\sigmau, \epsv)_z, \quad \text{ and } \quad  |||\vfett|||_z^2 \coloneqq \arundz(\vfett,\vfett), 
    \end{equation*}
    with $\ufett, \vfett \in \Hfett$.

    
\subsection{Error estimator $\etalew$}
\label{subsection: etalew}

    Next, we define the error estimators. 
    The first error estimator is based on $||| Res_h |||_{-1}$.
   
    \begin{definition}[first error estimator $\etalew$, {\cite[Definition 3.1]{error_estimator_pou}}]
        \label{def: etazew}
        The local error estimator is defined as
    	\begin{equation*}
    	   \etazew \coloneqq \sup_{\vfett \in \Hfett, \, \|\phiz^{\sfrac{1}{2}}\, D\vfett\|_z \neq 0} \ \frac{Res_z (\vfett)}{\|\phiz^{\sfrac{1}{2}}\, D\vfett\|_z}.
        \end{equation*}
    	Summing up these local estimators we obtain the global estimator
       \begin{equation*}
           \etalew^2 \coloneqq \sum_{z \in \mathcal{N}} \etazew^2.
       \end{equation*}
    \end{definition}
    
\subsubsection{Reliability of $\etalew$}
    
    We analyse the reliability of $\etalew$, i.e.\ whether the estimator approximates the error from above. 
    
    \begin{theorem}[Reliability of $\etalew$]
        \label{thm: reliability of etalew}
        Let $(\kappa,\ufett)$ and $(\kappa_h,\ufett_h)$ be the eigenpairs of the eigenvalue problems \eqref{eq: weak formulation a(u,v) = (ku,v)+rand} and \eqref{eq: diskr weak formulation a(u,v) = (ku,v)+rand}, respectively.
        Then
        \begin{equation}
            \label{eq: etalew resh norm abschaetzungen}
            |||Res_h|||_{-1} \leq \sqrt{3} \, \etalew, \qquad |||Res_h|||_* \leq \sqrt{3}\,  \CKorn \, \etalew,
        \end{equation}
        and for the error $\efett = \ufett - \ufett_h$
        \begin{equation}
            \label{eq: etalew reliability}
             |||\efett|||  \leq \sqrt{3} \, \CKorn \, \etalew + \frac{\kappa + \kappah}{2 \, \sqrt{\kappa}} \, \|\efett\|_0.
        \end{equation}
    \end{theorem}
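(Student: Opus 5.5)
The plan is to use the partition of unity $\sum_{z\in\Ncal}\phiz\equiv 1$ on $\overline\Omega$ to split the global residual into the local residuals $Res_z$, and then to apply the definition of $\etazew$ together with Cauchy--Schwarz. The second and third inequalities then follow directly from \eqref{eq: resh dual norm korn elast leq resh -1 norm} and \eqref{eq: gedicke L3.1 Teil 2}.

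\emph{Step 1 (localization).} Fix $\vfett\in\Hfett$ with $\|D\vfett\|_0\neq0$. Since $\phiz\in\Pcal_1$ is Lipschitz, $\phiz\vfett\in H^1$. Its boundary conditions are inherited from $\vfett$: it vanishes on $\GammaD$, and $(\phiz\vfett)\cdot\nfett=\phiz\,(\vfett\cdot\nfett)=0$ on $\Gammag$. Hence $\phiz\vfett\in\Hfett$ and $Res_z(\vfett)$ is well defined. Linearity of $Res_h$ and $\sum_z\phiz=1$ then give
\begin{equation*}
Res_h(\vfett)=\sum_{z\in\Ncal}Res_h(\phiz\vfett)=\sum_{z\in\Ncal}Res_z(\vfett).
\end{equation*}

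\emph{Step 2 (local bound and summation).} For each $z$ I want
\begin{equation*}
Res_z(\vfett)\le\etazew\,\|\phiz^{\sfrac12}D\vfett\|_z .
\end{equation*}
When $\|\phiz^{\sfrac12}D\vfett\|_z\neq0$ this is just the definition of $\etazew$. Summing, applying the discrete Cauchy--Schwarz inequality, and using
\begin{equation*}
\sum_z\|\phiz^{\sfrac12}D\vfett\|_z^2=\int_\Omega\Big(\sum_z\phiz\Big)|D\vfett|^2\,dx=\|D\vfett\|_0^2
\end{equation*}
yields $Res_h(\vfett)\le\etalew\,\|D\vfett\|_0$. This is even stronger than the claimed bound with $\sqrt3$, so the first inequality in \eqref{eq: etalew resh norm abschaetzungen} follows a fortiori. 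The factor $\sqrt3$ presumably comes from the authors' route (e.g.\ an overlap count of the patches within a triangle). I would keep it as stated, since $1\le\sqrt3$.

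\emph{Main obstacle: the degenerate case.} The delicate point is the case $\|\phiz^{\sfrac12}D\vfett\|_z=0$, where the supremum gives no information. Since $\phiz>0$ on $\omega_z$ and the patch is connected, $\vfett$ must equal a constant $\mathbf c$ on $\omega_z$, and then $Res_z(\vfett)=Res_h(\phiz\mathbf c)$. I would argue that $\phiz\mathbf c\in\Hfett_h$, because $\phiz\vfett\in\Hfett$ and it coincides with $\phiz\mathbf c$. In concrete terms: at a Dirichlet node, $\mathbf c=\nullfett$. At a gliding node, $\mathbf c\cdot\nfett=0$ on the gliding edges of the patch, and $\mathbf c=\nullfett$ if two gliding edges with different normals meet. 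Galerkin orthogonality $Res_h|_{\Hfett_h}=0$ then gives $Res_z(\vfett)=0$, so the local bound holds trivially in this case as well. This case analysis at boundary nodes is the step I would check most carefully.

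\emph{Step 3 (conclusion).} Combining the first bound with \eqref{eq: resh dual norm korn elast leq resh -1 norm} gives
\begin{equation*}
|||Res_h|||_*\le\CKorn\,|||Res_h|||_{-1}\le\sqrt3\,\CKorn\,\etalew .
\end{equation*}
Inserting this into the last estimate of \eqref{eq: gedicke L3.1 Teil 2}, which rests on Proposition \ref{lemma: gedicke L3.1 Teil 2} with $Res_h(\efett)\le|||Res_h|||_*\,|||\efett|||$, yields \eqref{eq: etalew reliability}.
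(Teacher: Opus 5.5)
Your proposal is correct and follows the paper's proof: you localize $Res_h(\vfett)=\sum_{z\in\Ncal}Res_z(\vfett)$, bound each term by the definition of $\etazew$, apply the discrete Cauchy--Schwarz inequality, and conclude with \eqref{eq: resh dual norm korn elast leq resh -1 norm} and the third estimate in \eqref{eq: gedicke L3.1 Teil 2}. The two differences both strengthen the argument: your identity $\sum_{z}\|\phiz^{\sfrac{1}{2}}D\vfett\|_z^2=\|D\vfett\|_0^2$ (from $\sum_z\phiz=1$) is sharper than the paper's H\"older step with $\|\phiz\|_\infty=1$ plus an overlap count of $3$, which is where the $\sqrt{3}$ comes from, and your treatment of the degenerate case $\|\phiz^{\sfrac{1}{2}}D\vfett\|_z=0$, via $\phiz\mathbf{c}=\phiz\vfett\in\Hfett_h$ and $Res_h|_{\Hfett_h}=0$, supplies a detail the paper passes over silently.
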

    \begin{proof}
        Using \eqref{eq: resh dual norm korn elast leq resh -1 norm}, which states that $|||Res_h|||_{-1} \CKorn \geq |||Res|||_*$,
        the right equation for $\etalew$ in \eqref{eq: etalew resh norm abschaetzungen} follows from the left one. 
        Equation \eqref{eq: etalew reliability} follows using the bounds from the third equation in \eqref{eq: gedicke L3.1 Teil 2} together with the right equation in \eqref{eq: etalew resh norm abschaetzungen}, so we only need to show the left equation of \eqref{eq: etalew resh norm abschaetzungen}.\\
        Let $\vfett \in \Hfett, \ \|D \vfett\|_0 \neq 0$. 
        Consider the residual and split it into local residuals using the hat functions $\phiz$. 
        Insert the definition of $\etazew$ into the equation, then use a Cauchy-Schwarz inequality ($\CS$), a Hölder inequality (H.) and $\|\phiz\|_{\infty} =1$ to obtain
        \begin{align*}
            Res_h(\vfett) &\overset{\eqref{eq: Resh in R and J}}{=} \int_{\Omega} (\kappah \ufetth + \divsigmauh) \cdot \vfett \, dx - \sum_{E \in \mathcal{E}} \int_E ([\sigmauh] \; \nfett_E) \cdot \vfett \, ds \\
            &= \sum_{z \in \Ncal} \Big( \int_{\Omega} \phiz \, (\kappah \ufetth + \divsigmauh) \cdot \vfett \, dx - \sum_{E \in \mathcal{E}} \int_E \phiz \, ([\sigmauh] \; \nfett_E) \cdot \vfett \, ds \Big) \\
            &= \sum_{z \in \Ncal} Res_z(\vfett)
            \leq \sum_{z \in \Ncal} \etazew \| \phiz^{\sfrac{1}{2}} D\vfett \|_z \\
            &\overset{\CS}{\leq} \Big( \sum_{z \in \Ncal} \etazew^2 \Big)^{\sfrac{1}{2}} \, \Big( \sum_{z \in \Ncal} \|\phiz^{\sfrac{1}{2}} D\vfett\|^2_z \Big)^{\sfrac{1}{2}} 
            \overset{\text{H.}}{\leq}  \big(\etalew^2\big)^{\sfrac{1}{2}}  \, \Big( \sum_{z \in \Ncal} \|\phiz\|_{\infty} \, \|D\vfett\|^2_z \Big)^{\sfrac{1}{2}}\\
            &\leq \sqrt{3} \, \etalew \, \|D\vfett\|_0,
        \end{align*}
        where  the number 3 stems from the overlap of the vertex patches.
        In conclusion, we have for any $\vfett \in \Hfett$, $\|D\vfett \|_0 \neq 0$,
        \begin{equation*}
            Res_h(\vfett) \leq \sqrt{3} \, \etalew \, \|D\vfett\|_0,
        \end{equation*}
        and therefore the same equation holds for the supremum, which implies \eqref{eq: etalew resh norm abschaetzungen}.
    \end{proof}
    
\subsubsection{Efficiency of $\etalew$}
\label{subsubsection: eff of etalew}
    
    To prove the efficiency of $\etalew$ we use a weighted Poincaré-Friedrichs inequality from \cite{error_estimator_pou} that was proven e.g.\ in \cite{CF_fully_reliable_localized_err_control_in_the_FEM, MNS_Local_pb_stars}. For this inequality we need to define the following spaces.
    
    \begin{definition}
        \label{def: Vz fuer etazew}
        Fix a node $z \in \Ncal$. Then depending on the location of $z$ and the boundary condition we define
        \begin{enumerate}
            \item[a)] for $z \in \Ncal_{\Omega}$ or $z \in \Ncal_N$ 
            \begin{equation*}
                \Vfett_z \coloneqq \left\{ \vfett \in H^1(\omega_z, \R^2) :\ \int_{\omega_z} \vfett \, dx = \nullfett \right\},
            \end{equation*}
            
            \item[b)] for $z \in \Ncal_D$ or $z \in \Ncal_g$ with $z$ being on a corner
            \begin{equation*}
                \Vfett_z \coloneqq \left\{ \vfett \in H^1(\omega_z, \R^2) :\ \vfett = \nullfett \text{ on } \Ecal_z \cap \Ecal_D \text{ and } \vfett \cdot \nfett = 0 \text{ on } \Ecal_z \cap \Ecal_g \right\},
            \end{equation*}
            \item[c)] \label{eq: def Vz fuer etazew gliding}
            for $z \in \Ncal_g$ with $z$ not being on a corner
            \begin{equation*}
                \Vfett_z \coloneqq \left\{ \vfett \in H^1(\omega_z, \R^2) :\ \vfett \cdot \nfett = 0 \text{ on } \Ecal_z \cap \Ecal_g \text{ and } \int_{\Ecal_z \cap \Ecal_g} \vfett \cdot \tfett \, dx = 0 \right\}
            \end{equation*}
            with $\tfett$ the unit tangent vector.
        \end{enumerate}
    \end{definition}
    \begin{remark}
        Using this space $\Vfett_z$ we rewrite $\etazew$ as
        \begin{equation}
            \etazew = \sup_{\vfett \in \Vfett_z \setminus \{\nullfett\}} \frac{Res_z (\vfett)}{\|\phiz^{\sfrac{1}{2}}\, D\vfett\|_z}.
            \label{eq: comp of etazew defi etazew neu}
        \end{equation}
        This coincides with Definition \ref{def: etazew} as $\|\phiz^{\sfrac{1}{2}} \, D\vfett\|_z \neq 0$ is equivalent to $\vfett$ not being constant on $\omega_z$, e.g.\ in case a) $\int_{\omega_z} \vfett \, dx = 0 , \ \vfett \neq \nullfett$ and similar in cases b) and c).
    \end{remark}
    
    \begin{proposition}[Weighted Poincaré-Friedrichs inequality, {\cite[Theorem 3.2]{error_estimator_pou}}]
        Let $\vfett \in \Vfett_z$. Then
        \begin{equation}
            \label{eq: weighted Poincare-Friedrichs}
            \|\vfett\|_z \leq \CPF\, h_z \, \|\phiz^{\sfrac{1}{2}}\, D\vfett\|_z
        \end{equation}
        with $\CPF$ being a constant that depends on the shape and the boundary conditions of the patch $\omega_z$, but not on its size.
    \end{proposition}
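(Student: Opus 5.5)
The weighted Poincaré--Friedrichs inequality \eqref{eq: weighted Poincare-Friedrichs} will be obtained by a scaling argument combined with a compactness (Rellich) argument on a reference patch. The key observation is that $\phiz$ is not bounded from below on $\omega_z$, since it vanishes on $\partial\omega_z\setminus\partial\Omega$. So a standard Poincaré inequality must be applied on a region where $\phiz$ stays bounded away from zero, and the degenerate boundary layer must be handled separately.

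\textbf{Step 1: reduction to a fixed patch.} Under the affine map $x\mapsto (x-z)/h_z$, the patch $\omega_z$ goes to a patch $\hat\omega$ of unit diameter, and $\phiz$ goes to the corresponding hat function $\hat\varphi$. Both sides of \eqref{eq: weighted Poincare-Friedrichs} scale the same way, with $\|\vfett\|_z=h_z\|\hat\vfett\|_{\hat\omega}$ and $\|\phiz^{1/2}D\vfett\|_z=\|\hat\varphi^{1/2}\hat D\hat\vfett\|_{\hat\omega}$. The constraints defining $\Vfett_z$ in cases a)--c) are invariant under this map. It therefore suffices to prove $\|\hat\vfett\|_{\hat\omega}\le C\|\hat\varphi^{1/2}\hat D\hat\vfett\|_{\hat\omega}$ with $C$ depending only on the patch shape and the boundary-condition type.

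\textbf{Step 2: the weighted norm controls an unweighted one up to a compact term.} Split each triangle $T\in\Tcal(z)$ into the region $T^{\ge}=\{\varphi\ge 1/2\}$ and its complement. On each line segment from $z$ to the opposite edge, $\hat\varphi$ decreases linearly from $1$ to $0$. Writing $\hat\vfett$ in polar-type coordinates along these segments, a one-dimensional Hardy-type inequality $\int_0^1 |f|^2\,dt\lesssim \int_0^1 t\,|f'|^2\,dt+\int_{1/2}^1|f|^2\,dt$ will be used, where $t=\varphi$ is the distance parameter to the outer edge. This gives
\[
\|\hat\vfett\|_{\hat\omega}^2\lesssim \|\hat\varphi^{1/2}D\hat\vfett\|_{\hat\omega}^2+\|\hat\vfett\|_{L^2(\hat\omega^{\ge})}^2 .
\]
I expect this to be the main technical step: the weight vanishes on the outer boundary, so the usual Poincaré inequality is not directly available.

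\textbf{Step 3: contradiction argument.} Suppose the reduced inequality fails. Then there is a sequence $\hat\vfett_n$ in the transformed space with $\|\hat\vfett_n\|=1$ and $\|\hat\varphi^{1/2}D\hat\vfett_n\|\to0$. On $\hat\omega^{\ge}$ the weight is at least $1/2$, so $\hat\vfett_n$ is bounded in $H^1(\hat\omega^{\ge})$. By Rellich, a subsequence converges in $L^2(\hat\omega^{\ge})$, and by Step 2 the sequence is Cauchy in $L^2(\hat\omega)$. The limit has vanishing weighted gradient, hence is constant on the connected patch.

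The constraints then force this constant to be zero:
\begin{itemize}
\item in case a), because the mean value vanishes;
\item in case b), because of the zero trace, or zero normal trace on two non-parallel edges at a corner;
\item in case c), because the normal component vanishes and the tangential mean vanishes.
\end{itemize}
These constraints pass to the limit because they are continuous in the weighted $H^1$ topology. For the trace constraints, the relevant edges contain $z$, where the weight is positive, so this holds; Step 2 together with local $H^1$ bounds near these edges gives it. This contradicts $\|\hat\vfett_n\|=1$.

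Since only finitely many patch shapes occur up to shape regularity, or alternatively by continuous dependence on the shape, $\CPF$ depends only on the shape and on the boundary conditions, not on $h_z$.
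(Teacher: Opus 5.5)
Your route differs from the paper's. The paper gives no argument of its own and only cites Carstensen--Funken and Morin--Nochetto--Siebert; Carstensen--Funken in particular are after computable constants. Your scaling, ray-wise Hardy reduction and Rellich contradiction argument is sound in outline. It is self-contained and treats all three boundary cases uniformly: the only input is that the side conditions kill constants and are continuous in the weighted norm. The price is that it gives no information on the size of $C_{\mathrm{PF}}$. That is harmless here, since $C_{\mathrm{PF}}$ only enters the higher-order term of the efficiency bounds.

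\textbf{The gap: the tangential-mean constraint in case c).} Here the constraint $\int \mathbf v\cdot\mathbf t\,ds=0$ is imposed over \emph{entire} edges $E=[z,z']\in\mathcal E_z\cap\mathcal E_g$, and $\varphi_z(z')=0$. So the weight is not positive along all of $E$. Neither $H^1$-convergence on $\{\hat\varphi\ge 1/2\}$ nor local $H^1$ bounds, which degenerate at $z'$, give $\int_E\hat{\mathbf v}_n\cdot\mathbf t\,ds\to\int_E\mathbf c\cdot\mathbf t\,ds$.

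What you need is that $\ell(\mathbf v)=\int_{\mathcal E_z\cap\mathcal E_g}\mathbf v\cdot\mathbf t\,ds$ is bounded with respect to $\|\mathbf v\|_{\hat\omega}+\|\hat\varphi^{1/2}D\mathbf v\|_{\hat\omega}$. This is not a formality. If it failed, there would be $\mathbf w_n=f_n\mathbf t$ with $\ell(\mathbf w_n)=1$ and weighted norm tending to zero. Then $\mathbf t-\ell(\mathbf t)\,\mathbf w_n\in\mathbf V_z$ would violate the inequality.

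The bound is true, but it needs an argument at the degenerate endpoint. For example:
\begin{itemize}
\item In the triangle $(z,z',z'')$ containing $E$, $\hat\varphi$ is proportional to the distance to the line through $z',z''$. Hence it is comparable to $r=|x-z'|$ in a sector at $z'$ adjacent to $E$.
\item Split this sector into dyadic pieces $S_k$ with $r\approx 2^{-k}$, and let $m_k$ be the mean of $\mathbf v$ on $S_k$.
\item Scaled trace and Poincar\'e inequalities give $\|\mathbf v-m_k\|_{L^1(E\cap\overline{S_k})}\lesssim 2^{-k/2}\|\hat\varphi^{1/2}D\mathbf v\|_{S_k}$ and $|m_k-m_{k+1}|\lesssim 2^{k/2}\|\hat\varphi^{1/2}D\mathbf v\|_{S_k\cup S_{k+1}}$.
\item Summing yields $\int_{E\cap B(z',\delta)}|\mathbf v|\,ds\lesssim \delta\,\|\mathbf v\|_{\hat\omega}+\delta^{1/2}\|\hat\varphi^{1/2}D\mathbf v\|_{\hat\omega}$.
\end{itemize}
With this lemma the tangential mean passes to the limit and your contradiction closes. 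For case b) and for the normal condition in c), your reasoning on the parts of the edges near $z$ is fine, since the limit is constant.

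\textbf{Two smaller points.}
\begin{itemize}
\item In Step 2, the ray coordinates $x=z+(1-t)(y-z)$ have Jacobian proportional to $1-t$, which vanishes at $z$. So $\int_{1/2}^1|f|^2\,dt$ is not controlled by $\|\hat{\mathbf v}\|_{L^2(\hat\omega^{\ge})}$. Apply the one-dimensional inequality on $[0,3/4]$ with the control term on $[1/2,3/4]$ instead; your two-dimensional conclusion is unchanged.
\item The claim that only finitely many patch shapes occur is false. It is also unnecessary, since the statement lets $C_{\mathrm{PF}}$ depend on the patch shape.
\end{itemize}
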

    \begin{proof}
        See \cite[Lemmata 5.1, 5.3]{CF_fully_reliable_localized_err_control_in_the_FEM}, \cite[Proposition 2.4]{MNS_Local_pb_stars}.
    \end{proof}
    
    Using this inequality we prove the efficiency of $\etalew$.
    \begin{theorem}[Efficiency of $\etalew$]
        \label{thm: efficiency of etalew}
        For any node $z \in \Ncal$ using the local constant
        \begin{equation*}
            \Azeta \coloneqq \sup_{\vfett \in \Vfett_z \setminus \{\nullfett\}} \|\C^{\sfrac{1}{2}} \varepsilon(\phiz \vfett)\|_z \ \big/ \ \|\phiz^{\sfrac{1}{2}} D\vfett\|_z \ < \infty
        \end{equation*}
        the local error estimator $\etazew$ is bounded by
        \begin{equation}
            \label{eq: lokale efficiency etazew}
            \etazew \leq \Azeta \|\C^{\sfrac{1}{2}}\epse\|_z + \CPF \, h_z^2 \, \big( |\kappa  - \kappah| +  \kappa \, \|\efett\|_z \big).
        \end{equation}
        Defining $C_A \coloneqq \max_{z \in \Ncal} \Azeta$, we conclude global efficiency of $\etalew$ in the sense that
        \begin{equation*}
            \etalew \leq \sqrt{6} \: C_A \, |||\efett||| + \sqrt{6} \, \CPF \, h^2 \, \big( |\kappa - \kappah| + \kappa \, \|\efett\|_0 \big),
        \end{equation*}
        with $h \coloneqq \max_{z \in \Ncal} \, h_z$.
    \end{theorem}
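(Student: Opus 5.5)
The plan is to rewrite the local residual in terms of the error using the continuous equation, then split it into an energy part and an $\Lfett^2$ part, and bound the latter with the weighted Poincaré–Friedrichs inequality \eqref{eq: weighted Poincare-Friedrichs}. First, fix $z$ and use the remark after Definition \ref{def: Vz fuer etazew}: by \eqref{eq: comp of etazew defi etazew neu} it suffices to bound $Res_z(\vfett)$ for $\vfett \in \Vfett_z\setminus\{\nullfett\}$. (The reduction uses that adding an admissible constant $\mathbf c$ to $\vfett$ changes $Res_z$ by $Res_h(\phiz \mathbf c)=0$, since $\phiz\mathbf c\in\Hfett_h$.) Since $\arund(\ufett,\phiz\vfett)=\kappa(\ufett,\phiz\vfett)_0$ by \eqref{eq: weak formulation a(u,v) = (ku,v)+rand}, I will write
\begin{equation*}
Res_z(\vfett)=\arund(\efett,\phiz\vfett)+\big(\kappah\ufetth-\kappa\ufett,\phiz\vfett\big)_0
=\arundz(\efett,\phiz\vfett)+(\kappah-\kappa)(\ufetth,\phiz\vfett)_z-\kappa(\efett,\phiz\vfett)_z ,
\end{equation*}
where the second form holds because $\operatorname{supp}\phiz=\bar\omega_z$.

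Next I bound the three terms. For the first, Cauchy–Schwarz in the $\C$-weighted inner product gives $\arundz(\efett,\phiz\vfett)\le \|\C^{\sfrac12}\epse\|_z\,\|\C^{\sfrac12}\varepsilon(\phiz\vfett)\|_z$. The definition of $\Azeta$ then yields at most $\Azeta\|\C^{\sfrac12}\epse\|_z\,\|\phiz^{\sfrac12}D\vfett\|_z$. Finiteness of $\Azeta$ follows from $\varepsilon(\phiz\vfett)=\phiz\varepsilon(\vfett)+\operatorname{sym}(\vfett\otimes\nabla\phiz)$, from $|\nabla\phiz|\lesssim h_z^{-1}$, and from \eqref{eq: weighted Poincare-Friedrichs}. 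For the two $\Lfett^2$ terms I use $\|\phiz\|_\infty=1$, so that $\|\phiz\vfett\|_z\le\|\vfett\|_z\le \CPF h_z\|\phiz^{\sfrac12}D\vfett\|_z$. This gives the bound $\CPF h_z\big(|\kappa-\kappah|\,\|\ufetth\|_z+\kappa\|\efett\|_z\big)\|\phiz^{\sfrac12}D\vfett\|_z$. Dividing by $\|\phiz^{\sfrac12}D\vfett\|_z$ and taking the supremum over $\Vfett_z$ then gives a local bound of the shape \eqref{eq: lokale efficiency etazew}.

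The main obstacle is the extra factor of $h_z$ needed to reach $h_z^2$. A single application of \eqref{eq: weighted Poincare-Friedrichs} only produces $h_z$. For the eigenvalue term I would obtain the second factor from the smallness of the patch: $\|\ufetth\|_z\le|\omega_z|^{\sfrac12}\|\ufetth\|_{L^\infty(\omega_z)}\lesssim h_z$, using an inverse/$L^\infty$ bound for the normalized discrete eigenfunction. For the term $\kappa(\efett,\phiz\vfett)_z$ I would try the same device, estimating $\|\phiz\vfett\|_{L^1(\omega_z)}\le|\omega_z|^{\sfrac12}\|\vfett\|_z$ against an $L^\infty$ bound of $\efett$. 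Alternatively, I would check whether the normalisation of $\CPF$ in \cite{error_estimator_pou} is meant to absorb one power of $h_z$. This is the step I would scrutinise most carefully, since dimensionally the $\Lfett^2$ contribution to an $H^{-1}$-type quantity scales like $h_z\|\cdot\|_z$.

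For the global estimate, I square the local bound and use $(a+b)^2\le 2a^2+2b^2$ twice, which gives
\begin{equation*}
\etazew^2\le 2\Azeta^2\|\C^{\sfrac12}\epse\|_z^2+4\CPF^2h_z^4\big(|\kappa-\kappah|^2+\kappa^2\|\efett\|_z^2\big).
\end{equation*}
I then sum over $z\in\Ncal$ and replace $\Azeta\le C_A$ and $h_z\le h$. Since each triangle lies in exactly three patches, $\sum_z\|\C^{\sfrac12}\epse\|_z^2=3|||\efett|||^2$ and $\sum_z\|\efett\|_z^2=3\|\efett\|_0^2$. For the eigenvalue term, the count $\#\Ncal\lesssim h^{-2}$ on the quasi-uniform mesh is compensated by one factor $h_z^2$, with $\sum_z h_z^2\lesssim|\Omega|$. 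Taking square roots and using $\sqrt{a+b}\le\sqrt a+\sqrt b$ then yields the stated constants $\sqrt6\,C_A$ and $\sqrt6\,\CPF$.
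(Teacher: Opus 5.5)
The gap in your proposal is the second factor of $h_z$, but you did not overlook it: the paper's own proof obtains it through an invalid step. Up to that point your local argument matches the paper's, and those steps are correct: the identity $Res_z(\vfett)=\arundz(\efett,\phiz\vfett)+(\kappah\ufetth-\kappa\ufett,\phiz\vfett)_z$, the bound of the energy part by $\Azeta\|\C^{1/2}\varepsilon(\efett)\|_z\,\|\phiz^{1/2}D\vfett\|_z$, and $\|\phiz\vfett\|_z\le\|\vfett\|_z\le\CPF h_z\|\phiz^{1/2}D\vfett\|_z$.

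None of your devices supplies the missing $h_z$ with the stated constant. $\CPF$ is explicitly independent of the patch size, so it cannot absorb $h_z$. For $\kappa(\efett,\phiz\vfett)_z$ the $L^\infty$ route gives $h_z^2\kappa\|\efett\|_{L^\infty(\omega_z)}$. You cannot return to $\kappa\|\efett\|_z$ from there, because only $\|\efett\|_z\lesssim h_z\|\efett\|_{L^\infty(\omega_z)}$ holds; there is no inverse estimate for the non-discrete $\efett$. The $L^\infty$ bound on $\ufetth$ for the eigenvalue term would need its own proof, and it brings in a constant other than $\CPF$.

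Your dimensional diagnosis is right, and it identifies the flaw in the paper's proof. The paper gets the extra power from $\|\phiz\vfett\|_z\le\|\phiz\|_z\|\vfett\|_0\le h_z\|\vfett\|_0$. This is not a Cauchy--Schwarz inequality: Cauchy--Schwarz bounds the $L^1$ norm of $\phiz\vfett$, not its $L^2$ norm. If you transplant a fixed mean-free function to a patch of diameter $h_z$, the left side is of order $h_z$ and the right side of order $h_z^2$, so the inequality fails on small patches. The valid bound is $\|\phiz\vfett\|_z\le\|\phiz\|_\infty\|\vfett\|_z$, which is exactly what you used. What can actually be proved is therefore $\etazew\le\Azeta\|\C^{1/2}\varepsilon(\efett)\|_z+\CPF h_z\|\kappa\ufett-\kappah\ufetth\|_z$. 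Its remainder is still of higher order by the a priori rates, but the $h_z^2$ version is established neither by your argument nor by the paper's.

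For the global bound, you should not sum the already split local estimate. The sum $\sum_{z\in\Ncal}|\kappa-\kappah|^2$ counts nodes. Compensating via $\#\Ncal\lesssim h^{-2}$ and $\sum_z h_z^2\lesssim|\Omega|$ costs a power of $h$ and introduces a mesh constant. Also, applying $(a+b)^2\le2a^2+2b^2$ twice puts $\sqrt{12}$, not $\sqrt6$, in front of $\CPF\,\kappa\|\efett\|_0$.

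The paper avoids both problems by keeping $\|\kappa\ufett-\kappah\ufetth\|_z$ intact and summing with overlap $3$. This gives $\etalew^2\le6C_A^2|||\efett|||^2+6\CPF^2h^{2}\|\kappa\ufett-\kappah\ufetth\|_0^2$; the paper writes $h^4$ here, which is the power that needs correcting. Only afterwards does it split using $\|\ufetth\|_0=1$. This yields exactly $\sqrt6\,C_A$ and $\sqrt6\,\CPF$, with $h$ in place of $h^2$. Alternatively, keep your factor $\|\ufetth\|_z$, use $\sum_z\|\ufetth\|_z^2=3$, and apply the triangle inequality in $\ell^2(\Ncal)$ instead of a second Young inequality.
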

    
    \begin{proof}
        The proof of the estimate for $\Azeta < \infty$ can be found in \cite[Theorem 3.3]{error_estimator_pou}. \\
        We show local efficiency first.
        Let $\vfett \in \Vfett_z \setminus \{\nullfett\}$ be arbitrary. 
        Then the definitions of the local residual \eqref{eq: def residual for eigenvalue problem} and $\Azeta$ together with the weak formulation \eqref{eq: weak formulation a(u,v) = (ku,v)+rand} and a Cauchy-Schwarz inequality ($\CS$) yield
        \begin{align}
            \notag
            Res_z(\vfett) &= Res_h(\phiz \vfett) \\
            \notag
            &\overset{\eqref{eq: def residual for eigenvalue problem}}{=} \int_{\omega_z} \kappah\,  \ufetth \cdot \phiz \vfett \, dx - \int_{\omega_z} \C \varepsilon(\ufett_h): \varepsilon(\phiz \vfett) dx \\
            \label{eq: proof eff etazew - Resz als Ceps(e) + khuh-ku}
            &\overset{\eqref{eq: weak formulation a(u,v) = (ku,v)+rand}}{=} \int_{\omega_z} \C \varepsilon(\ufett - \ufett_h): \varepsilon(\phiz \vfett) dx+ \int_{\omega_z}   (\kappah\ufetth - \kappa \, \ufett) \cdot \phiz \vfett \, dx\\
            \notag
            &\overset{\CS}{\leq} \|\C^{\sfrac{1}{2}} \epse\|_z \|\C^{\sfrac{1}{2}} \varepsilon(\phiz \vfett)\|_z + \|\kappah\ufetth - \kappa \, \ufett\|_z \, \|\phiz \vfett\|_z\\
            \label{eq: proof efficiency etalew, Resh als summe von a() und restterm}
            & \overset{\text{Def.\ } \Azeta}{\leq} \|\C^{\sfrac{1}{2}} \epse\|_z\, \Azeta \, \|\phiz^{\sfrac{1}{2}} D \vfett\|_z +   \|\kappa\,\ufett - \kappah \ufetth\|_z \, \|\phiz \vfett\|_z
        \end{align}
        To estimate $\|\phiz \vfett\|_z$ we use a Cauchy-Schwarz inequality and the weighted Poincaré-Friedrichs inequality \eqref{eq: weighted Poincare-Friedrichs}
        \begin{equation*}
            \|\phiz \vfett\|_z \leq \|\phiz\|_z \, \|\vfett\|_0 
            \leq h_z \|\vfett\|_0 
            \overset{\eqref{eq: weighted Poincare-Friedrichs}}{\leq} \CPF \, h_z^2 \, \|\phiz^{\sfrac{1}{2}} D\vfett\|_z.
        \end{equation*}
        Thus,
        \begin{equation*}
            Res_z(\vfett) \leq \|\C^{\sfrac{1}{2}} \epse\|_z\, \Azeta \, \|\phiz^{\sfrac{1}{2}} D \vfett\|_z + \CPF \,  h_z^2 \, \|\kappa\,\ufett - \kappah \ufetth\|_z \, \|\phiz^{\sfrac{1}{2}} D\vfett\|_z.
        \end{equation*}
        Since this holds for any $\vfett \in \Vfett_z \setminus \{ \nullfett \}$, we estimate $\etazew$ by dividing both sides by $\|\phiz^{\sfrac{1}{2}} D \vfett\|_z$ (which is not zero by definition of $\Vfett_z$) and obtain
        \begin{equation}
            \label{eq: efficiency etazew mit zusatzterm}
            \etazew \leq \Azeta \|\C^{\sfrac{1}{2}}\epse\|_z + \CPF \, h_z^2 \,  \|\kappa\,\ufett - \kappah  \ufetth\|_z.
        \end{equation}
        We use $\|\ufetth\|_z \leq \|\ufetth\|_0 = 1$ and $\kappa \geq 0$ to obtain
        \begin{align}
            \notag
            \| \kappa\, \ufett - \kappah \ufetth\|_z 
            &=  \|\kappa\, \ufett - \kappa \, \ufetth + \kappa \, \ufetth - \kappah \ufetth\|_z  \\
            \notag
            &\leq  \|\kappa\, \ufetth - \kappah \ufetth\|_z + \| \kappa \, \ufett - \kappa \, \ufetth\|_z  \\
            \label{eq: ku - kh uh Abschaetzung}
            &\leq |\kappa  - \kappah| +  \kappa \, \|\efett\|_z.
        \end{align}
        Together with \eqref{eq: efficiency etazew mit zusatzterm} this yields the local efficiency. \\
        For the global version of the estimator, note that due to a Young's inequality 
        \begin{align*}
            \etalew^2 &= \sum_{z \in \Ncal} \etazew^2 
            \leq \sum_{z \in \Ncal} \Big(  \Azeta \|\C^{\sfrac{1}{2}}\epse\|_z + \CPF \,  h_z^2 \, \|\kappa\,\ufett - \kappah \ufetth\|_z  \Big)^2\\
            &\leq \sum_{z \in \Ncal} 2\, C_A^2 \|\C^{\sfrac{1}{2}}\epse\|^2_z  + \sum_{z \in \Ncal} 2 \, \CPF^2 \,  h_z^4 \, \|\kappa\,\ufett - \kappah\ufetth\|_z^2.
        \end{align*}
        
        Estimating the overlap by $3$ and using $h$ as upper bound for any $h_z$ yields
        \begin{align*}
            \etalew &\leq \Big( \, 6\,  C_A^2 \, \| \C^{\sfrac{1}{2}}\epse\|^2_0  + 6 \, \CPF^2 \, h^4 \,  \|\kappa\,\ufett - \kappah \ufetth\|_0^2 \Big)^{\sfrac{1}{2}} \\
            &\leq \sqrt{6} \, C_A \, |||\efett||| + \sqrt{6}\, \CPF \, h^2 \,  \|\kappa\,\ufett - \kappah \ufetth\|_0 \\
            &\overset{\eqref{eq: ku - kh uh Abschaetzung} }{\leq} \sqrt{6} \, C_A \, |||\efett||| + \sqrt{6}\, \CPF \, h^2 \, \big( |\kappa - \kappah| + \kappa \, \|\efett\|_0 \big),
        \end{align*}
        which shows the efficiency of $\etalew$.
    \end{proof}
    
    In addition, we formulate the reliability and efficiency results with respect to the eigenvalue error.
    
    \begin{corollary}[Reliability and efficiency regarding the eigenvalue error]
        \label{cor: reliability and efficiency of etalew wrt kappa}
        Under the same assumptions as in Theorems \ref{thm: reliability of etalew} and \ref{thm: efficiency of etalew} it holds that $\etalew$ is reliable in the sense that
        \begin{equation*}
            |\kappa - \kappah| \leq 6 \, \CKorn^2 \, \etalew^2 + \frac{\kappa^2 + \kappah^2}{\kappa} \, \|\efett\|_0^2 
        \end{equation*}
        and efficient in the sense that
        \begin{equation*}
            \etalew^2 \leq 12 \, C_A^2 \, |\kappa - \kappah| + 24 \, \CPF^2 \, h^4 \, |\kappa-\kappah|^2 +  12 \, \|\efett\|_0^2 \, \left( C_A^2 \, \kappa  +  2 \, \CPF^2 \, h^4 \, \kappa^2 \right).
        \end{equation*}
    \end{corollary}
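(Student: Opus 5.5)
The plan is to convert the energy-norm bounds of Theorems \ref{thm: reliability of etalew} and \ref{thm: efficiency of etalew} into eigenvalue bounds using the identity \eqref{eq: CG Paper L3.1  eq 1} of Proposition \ref{lemma: gedicke L3.1 Teil 1}. Since $\kappah \geq \kappa$ by the Rayleigh--Ritz principle, that identity gives
\begin{equation*}
    |\kappa - \kappah| = \kappah - \kappa = |||\efett|||^2 - \kappa \, \|\efett\|_0^2 \leq |||\efett|||^2 .
\end{equation*}
Conversely, $|||\efett|||^2 = |\kappa - \kappah| + \kappa \, \|\efett\|_0^2$.

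\textbf{Reliability.} First bound $|\kappa-\kappah| \leq |||\efett|||^2$. Then square the reliability estimate \eqref{eq: etalew reliability} and apply Young's inequality $(a+b)^2 \leq 2a^2 + 2b^2$. This gives
\begin{equation*}
    |\kappa - \kappah| \leq 6 \, \CKorn^2 \, \etalew^2 + \frac{(\kappa+\kappah)^2}{2\kappa} \, \|\efett\|_0^2 .
\end{equation*}
It remains to use $(\kappa+\kappah)^2 \leq 2(\kappa^2+\kappah^2)$, which turns the last coefficient into $\frac{\kappa^2+\kappah^2}{\kappa}$ as stated.

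\textbf{Efficiency.} Square the global efficiency bound of Theorem \ref{thm: efficiency of etalew} with Young's inequality:
\begin{equation*}
    \etalew^2 \leq 12 \, C_A^2 \, |||\efett|||^2 + 12 \, \CPF^2 \, h^4 \, \big( |\kappa-\kappah| + \kappa \, \|\efett\|_0 \big)^2 .
\end{equation*}
Apply Young's inequality once more to the last bracket, producing $24 \, \CPF^2 \, h^4 \, |\kappa-\kappah|^2 + 24 \, \CPF^2 \, h^4 \, \kappa^2 \, \|\efett\|_0^2$. Then substitute $|||\efett|||^2 = |\kappa-\kappah| + \kappa \, \|\efett\|_0^2$ into the first term. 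Collecting the $\|\efett\|_0^2$ terms gives $12 \, \|\efett\|_0^2 \, (C_A^2 \, \kappa + 2 \, \CPF^2 \, h^4 \, \kappa^2)$, which matches the claim.

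The argument is pure algebra, and there is no real obstacle. The only point needing care is the constant bookkeeping: each squaring must use the factor-$2$ form of Young's inequality, and the reliability coefficient requires the step $(\kappa+\kappah)^2 \leq 2(\kappa^2+\kappah^2)$ to reach the stated form.
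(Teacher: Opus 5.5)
Your proposal is correct and follows the same route as the paper. For reliability you use $|\kappa-\kappah| = \kappah-\kappa \le |||\efett|||^2$ together with the squared bound of Theorem~\ref{thm: reliability of etalew}; for efficiency you square the global bound of Theorem~\ref{thm: efficiency of etalew} and substitute the identity of Proposition~\ref{lemma: gedicke L3.1 Teil 1}, and your explicit constant bookkeeping (the factor-$2$ Young inequality and $(\kappa+\kappah)^2\le 2(\kappa^2+\kappah^2)$) reproduces exactly the stated constants, which the paper leaves implicit.
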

    \begin{proof}
        Reliability follows from the first inequality in \eqref{eq: gedicke L3.1 Teil 2} and the reliability of $\etalew$ in Theorem \ref{thm: reliability of etalew}.
        Efficiency follows from Proposition \ref{lemma: gedicke L3.1 Teil 1} and the efficiency of $\etalew$ in Theorem \ref{thm: efficiency of etalew}.
    \end{proof}
    
    \begin{remark}
        Note that all other terms than the eigenvalue error term $12 \, C_A^2 \, |\kappa - \kappah|$ are of higher order by \eqref{eq: a priori convergence rates}.
    \end{remark}


\subsection{Error estimator $\mulew$}
\label{subsection: mulew}
    
    In the following section we define and examine the error estimator $\mulew$.
    Unlike $\etalew$, this estimator's reliabilty does not rely on a global Korn constant.
    
    \begin{definition}[second error estimator $\mulew$, {\cite[Definition 4.1]{error_estimator_pou}}]
        \label{def: muzew}
        Similar to $\etalew$, the estimator $\mulew$ is defined using local components $\muzew$ on patches $\omega_z$ based on $|||Res_h|||_{*}$
    	  \begin{equation*}
    	       \muzew \coloneqq \sup_{\vfett \in \Hfett, \ \|\phiz^{\sfrac{1}{2}} \, \C^{\sfrac{1}{2}} \, \varepsilon(\vfett)\|_z  \neq 0} \ \frac{Res_z (\vfett)}{\|\phiz^{\sfrac{1}{2}} \, \C^{\sfrac{1}{2}} \, \varepsilon(\vfett)\|_z }.
    	   \end{equation*}
    	   Summing up these local error estimators, we obtain
    	  \begin{equation*}
    	        \mulew^2 \coloneqq \sum_{z \in \Ncal} \muzew^2.
    	   \end{equation*}
    \end{definition}
    
    \subsubsection{Reliability of $\mulew$}
    As for $\etalew$, we also show the reliability of $\mulew$.
    
    \begin{theorem}[Reliability of $\mulew$]
        \label{thm: reliability of mulew}
        The estimator $\mulew$ bounds the error from above
        \begin{equation}
            \label{eq: reliability mulew}
            |||\efett||| \leq \sqrt{3} \, \mulew + \frac{\kappa + \kappah}{2 \, \sqrt{\kappa}} \, \|\efett\|_0.
        \end{equation}
    \end{theorem}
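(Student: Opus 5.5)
The plan is to mirror the proof of Theorem \ref{thm: reliability of etalew}, with the $\Hfett^1$-seminorm replaced by the energy norm throughout. The central intermediate claim is
\begin{equation*}
    |||Res_h|||_* \leq \sqrt{3}\, \mulew .
\end{equation*}
Once this holds, \eqref{eq: reliability mulew} follows at once by inserting it into the third estimate of \eqref{eq: gedicke L3.1 Teil 2}, namely $|||\efett||| \leq \frac{\kappa + \kappah}{2\sqrt{\kappa}}\|\efett\|_0 + |||Res_h|||_*$. Because the energy norm is used directly, no Korn constant appears, which is the point of $\mulew$.

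To prove the claim, fix $\vfett \in \Hfett$ with $|||\vfett||| \neq 0$. Since $\sum_{z\in\Ncal}\phiz \equiv 1$ and $Res_h$ is linear, we have $Res_h(\vfett) = \sum_{z \in \Ncal} Res_z(\vfett)$. For each $z$ we then bound
\begin{equation*}
    Res_z(\vfett) \leq \muzew \, \|\phiz^{\sfrac{1}{2}}\C^{\sfrac{1}{2}}\varepsilon(\vfett)\|_z .
\end{equation*}
This is immediate from Definition \ref{def: muzew} whenever the weighted norm is nonzero. If it vanishes, then $\varepsilon(\vfett) = 0$ on $\omega_z$ because $\phiz > 0$ there and $\C$ is positive definite. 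Hence $\vfett|_{\omega_z}$ is a rigid body motion, and we must still argue $Res_z(\vfett) \le 0$.

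This degenerate case is the only real obstacle, and I would handle it as follows. The function $\phiz \vfett$ is then a piecewise polynomial of degree at most $2$ on $\omega_z$, continuous, and vanishing outside $\omega_z$. It lies in $\Hfett$, since the boundary conditions of $\vfett$ carry over to $\phiz\vfett$. If $k \geq 2$, then $\phiz\vfett \in \Hfett_h$, and Galerkin orthogonality gives $Res_z(\vfett) = Res_h(\phiz\vfett) = 0$.

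For $k=1$ this argument fails, and I would instead fall back on the convention implicit in Definition \ref{def: muzew}: the supremum is taken in the extended sense, so $\muzew = \infty$ whenever $Res_z$ does not vanish on the local rigid body motions. In that case the bound holds trivially. Note that $\etazew$ faces the same issue only with constants, which are always handled by $\phiz \in \Pcal_1$.

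With the local bound established, we finish as in the $\etalew$ proof. The Cauchy--Schwarz inequality gives
\begin{equation*}
    Res_h(\vfett) \leq \mulew \Big(\sum_{z \in \Ncal} \|\phiz^{\sfrac{1}{2}}\C^{\sfrac{1}{2}}\varepsilon(\vfett)\|_z^2\Big)^{\sfrac{1}{2}} .
\end{equation*}
Using $0 \leq \phiz \leq 1$, each term in the sum is bounded by $|||\vfett|||_z^2$. Since every triangle belongs to exactly three vertex patches, $\sum_{z \in \Ncal} |||\vfett|||_z^2 = 3\,|||\vfett|||^2$. Therefore $Res_h(\vfett) \leq \sqrt{3}\,\mulew\,|||\vfett|||$, and taking the supremum over $\vfett$ yields the claim and hence \eqref{eq: reliability mulew}.

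One could sharpen the constant by using $\sum_z \phiz = 1$ directly, which gives $\sum_z \|\phiz^{\sfrac{1}{2}}\C^{\sfrac{1}{2}}\varepsilon(\vfett)\|_z^2 = |||\vfett|||^2$. I would simply match the stated factor $\sqrt{3}$.
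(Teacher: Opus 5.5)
Your proof is correct and follows the paper's argument: you localize via $\sum_z \phiz = 1$, bound each $Res_z$ by $\muzew$ times the weighted energy norm, and apply Cauchy--Schwarz with the patch overlap, combining this with Proposition~\ref{lemma: gedicke L3.1 Teil 2}. The paper does this for $\vfett = \efett$ directly rather than through $|||Res_h|||_*$, which amounts to the same thing. Your treatment of the case where the weighted norm vanishes (Galerkin orthogonality for $k \geq 2$, $\muzew = \infty$ otherwise) fills a point the paper's proof glosses over, and your remark that $\sum_z \phiz = 1$ actually yields the constant $1$ instead of $\sqrt{3}$ is also correct.
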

    \begin{proof}
        We start by estimating $Res_h (\efett)$.
        To do so, we localize the residual, insert a local norm, and estimate it by the supremum to obtain $\muzew$, which leads to $\mulew$ using a Cauchy-Schwarz inequality ($\CS$) and the properties of the partition of unity
        \begin{align*}
             Res_h(\efett) &=  \sum_{z \in \Ncal} Res_z(\efett) \\
             &= \sum_{z \in \Ncal} \: \frac{Res_z (\efett)}{\|\varphi_z^{\sfrac{1}{2}} \C^{\sfrac{1}{2}}  \epse\|_z} \|\phiz^{\sfrac{1}{2}} \C^{\sfrac{1}{2}} \epse\|_z \\
             &\leq \sum_{z \in \Ncal} \: \sup_{\vfett \in \Hfett, \|\varphi_z^{\sfrac{1}{2}} \C^{\sfrac{1}{2}}  \varepsilon(\vfett)\|_z \neq 0} \, \frac{Res_z (\vfett)}{\|\varphi_z^{\sfrac{1}{2}} \C^{\sfrac{1}{2}}  \varepsilon(\vfett)\|_z} \, \|\phiz^{\sfrac{1}{2}} \C^{\sfrac{1}{2}} \epse\|_z \\
             &= \sum_{z \in \Ncal} \muzew \, \|\phiz^{\sfrac{1}{2}} \C^{\sfrac{1}{2}} \epse\|_z \\
             &\overset{\CS}{\leq} \left( \sum_{z 
             \in \Ncal} \muzew^2 \right)^{\sfrac{1}{2}} \, \left( \sum_{z 
             \in \Ncal} \|\phiz^{\sfrac{1}{2}} \C^{\sfrac{1}{2}} \epse\|_z^2 \right)^{\sfrac{1}{2}} \\
             &\overset{\text{Def. } \phiz}{\leq} \sqrt{3} \, \mulew \; \|\C^{\sfrac{1}{2}} \epse\|_0 = \sqrt{3} \, \mulew \, |||\efett|||.
        \end{align*}
        Next, we use Proposition \ref{lemma: gedicke L3.1 Teil 2} and \eqref{eq: gedicke L3.1 Teil 2} to estimate $||| \efett |||$, where we insert our estimate of $Res_h(\efett)$
        \begin{align*}
            ||| \efett |||^2 &=  \frac{(\kappa + \kappa_h)}{2} \, \|\efett\|_0^2 + Res_h(\efett) \\
            &\overset{\eqref{eq: gedicke L3.1 Teil 2}}{\leq}  \frac{(\kappa + \kappa_h)}{2 \, \sqrt{\kappa}} \, \|\efett\|_0 \, ||| \efett ||| + Res_h(\efett) \\
            &\leq \frac{(\kappa + \kappa_h)}{2 \, \sqrt{\kappa}} \, \|\efett\|_0 \, ||| \efett ||| + 3 \, \mulew \, ||| \efett |||.
        \end{align*}
        Dividing both sides by $|||\efett|||$ yields the desired result.
    \end{proof}
    
\subsubsection{Efficiency of $\mulew$}
\label{subsubsection: eff of mulew}
    
    As for $\etalew$, we require a local efficiency estimate for $\mulew$. Again, we define suitable local spaces.
    
    \begin{definition}
        \label{def: Wz fuer muzew}
        Fix a node $z \in \Ncal$. Then depending on the location of $\omega_z$ and the boundary condition define
        \begin{enumerate}
            \item[a)] for $z \in \Ncal_{\Omega}$ or $z \in \Ncal_N$ 
            \begin{equation*}
                \Wfett_z \coloneqq \left\{ \vfett \in H^1(\omega_z, \R^2): \ \int_{\omega_z} \vfett \, dx = \nullfett, \ \int_{\omega_z} \rot \vfett \, dx = \nullfett \right\},
            \end{equation*}
            \item[b)] for $z \in \Ncal_D$ or $z \in \Ncal_g$ with $z$ being on a corner
            \begin{equation*}
                \Wfett_z \coloneqq \left\{ \vfett \in H^1(\omega_z, \R^2): \ \vfett = \nullfett \text{ on } \Ecal_z \cap \Ecal_D \text{ and } \vfett \cdot \nfett = 0 \text{ on } \Ecal_z \cap \Ecal_g \right\}
            \end{equation*}
            \item[c)] \label{eq: def Wz fuer muzew gliding}
            for $z \in \Ncal_g$ with $z$ not being on a corner
            \begin{equation*}
                \Wfett_z \coloneqq \left\{ \vfett \in H^1(\omega_z, \R^2): \ \vfett \cdot \nfett = 0 \text{ on } \Ecal_z \cap \Ecal_g  \text{ and } \int_{\Ecal_z \cap \Ecal_g} \vfett \cdot \tfett \, dx = 0 \right\}
            \end{equation*}
            with $\tfett$ the unit tangent vector.
        \end{enumerate}
    \end{definition}

    \begin{remark}
        With this space $\Wfett_z$ we rewrite $\muzew$ as
        \begin{equation}
            \muzew = \sup_{\vfett \in \Wfett_z \setminus \{\nullfett\}} \, \frac{Res_z(\vfett)}{\|\phiz^{\sfrac{1}{2}} \, \C^{\sfrac{1}{2}} \, \epsv \|_z }.
            \label{eq: comp of muzew defi muzew neu}
        \end{equation}
        Note that $\|\phiz^{\sfrac{1}{2}} \, \C^{\sfrac{1}{2}} \, \epsv\|_z \neq 0$ is implied by $\vfett \in \Wfett_z \setminus \{\nullfett\}$.
        The definitions in b) and c) coincide with the definitions b) and c) of $\Vfett_z$ in Definition \ref{def: Vz fuer etazew}.
    \end{remark}
    
    Using this definition we need the following Korn's inequality to proof the efficiency of $\mulew$.
    
    \begin{proposition}[Weighted Korn's inequality, {\cite[Chapter 4.3]{error_estimator_pou}}]
        For $\vfett \in \Wfett_z$ there exists a constant $\CWKorn>0$ depending on the shape but not the size of the patch $\omega_z$, such that
        \begin{equation}
            \label{eq: weighted Korn's inequality}
            \|\phiz^{\sfrac{1}{2}} \, D\vfett\|_z \leq \CWKorn \, \|\phiz^{\sfrac{1}{2}} \, \varepsilon(\vfett)\|_z.
        \end{equation}
    \end{proposition}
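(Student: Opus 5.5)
The plan is to argue by contradiction and compactness, and then use a scaling argument to see that the constant depends only on the shape of $\omega_z$. Since $\phiz$ is affine on each triangle of $\Tcal(z)$, we can map the patch to a reference patch $\hat\omega_z$ of unit diameter by the dilation $x \mapsto (x-z)/h_z$. The quotient $\|\phiz^{\sfrac{1}{2}} D\vfett\|_z / \|\phiz^{\sfrac{1}{2}}\varepsilon(\vfett)\|_z$ is invariant under this dilation: both numerator and denominator scale like $h_z^{0}$ in two dimensions after accounting for the $h_z^{-1}$ from derivatives and $h_z$ from the area element. The constraints defining $\Wfett_z$ (mean value, mean rotation, boundary conditions, tangential mean) are preserved as well. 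So it suffices to prove the inequality on a fixed reference patch, with a constant depending on its shape and on the boundary-condition type (cases a)–c)).

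On the reference patch, I first want the weighted semi-norm $\|\phiz^{\sfrac{1}{2}} D\vfett\|$ to be equivalent to the unweighted one $\|D\vfett\|$ for the purpose of the argument. The weight vanishes linearly on the outer boundary of the patch, so this equivalence does not hold directly. Instead I would work in the weighted Sobolev space $H^1_{\phi}$ with norm $\|\vfett\|^2 + \|\phiz^{\sfrac{1}{2}} D\vfett\|^2$. The key tool is a weighted Korn inequality without constraints, of the form
\[
\|\phiz^{\sfrac{1}{2}} D\vfett\| \lesssim \|\phiz^{\sfrac{1}{2}}\varepsilon(\vfett)\| + \|\vfett\|.
\]
This is known for weights that are powers of the distance to the boundary, since $\phiz$ is comparable to the distance to $\partial\hat\omega_z \setminus \{\text{edges through } z \text{ on } \partial\Omega\}$. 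I would derive it by Ne\v{c}as' lemma / the identity $\partial_{jk} v_i = \partial_j \varepsilon_{ik} + \partial_k\varepsilon_{ij} - \partial_i \varepsilon_{jk}$ in weighted negative norms, or cite the weighted Korn results (Kondratiev--Oleinik, Durán--López García) referenced via \cite{error_estimator_pou}. I expect this step to be the main obstacle; everything else is standard.

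With that inequality in hand, I would use the Peetre--Tartar contradiction argument. Suppose there are $\vfett_n \in \Wfett_z$ with $\|\phiz^{\sfrac{1}{2}} D\vfett_n\| = 1$ and $\|\phiz^{\sfrac{1}{2}}\varepsilon(\vfett_n)\| \to 0$. The weighted Poincaré--Friedrichs inequality \eqref{eq: weighted Poincare-Friedrichs} bounds $\|\vfett_n\|$, since in cases a)–c) we have $\Wfett_z \subseteq \Vfett_z$. The embedding $H^1_\phi \hookrightarrow L^2$ is compact, because the weight is $\mathrm{dist}^1$ and hence integrable in the required sense. So a subsequence converges in $L^2$, and by the unconstrained weighted Korn inequality it is Cauchy in $H^1_\phi$. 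The limit $\vfett$ therefore satisfies $\varepsilon(\vfett)=0$, so $\vfett$ is a rigid body motion $\mathbf a + b(-x_2,x_1)^T$, and it still lies in $\Wfett_z$ because the constraints are continuous in $H^1_\phi$ (traces on $\Ecal_z$ are fine since $\phiz$ is positive there away from the far endpoints).

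It remains to check in each case that the only rigid body motion in $\Wfett_z$ is zero. In case a), zero mean forces $\mathbf a + b\,\cdot(\text{centroid rotation}) = 0$, and zero mean rot gives $2b|\omega_z| = 0$, so $b=0$ and then $\mathbf a=0$. In case b), a Dirichlet edge kills the motion. Two non-parallel gliding edges meeting at a corner also kill it, since $\vfett\cdot\nfett=0$ on two lines through $z$ forces $\vfett(z)=0$ and $b=0$. In case c), $\vfett\cdot\nfett=0$ on a straight edge forces $b=0$ and $\mathbf a\parallel \tfett$, and the zero tangential mean gives $\mathbf a=0$. Hence $\vfett = 0$, which contradicts $\|\phiz^{\sfrac{1}{2}}D\vfett\|=\lim\|\phiz^{\sfrac{1}{2}}D\vfett_n\|=1$ (strong convergence in $H^1_\phi$). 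This yields $\CWKorn$.
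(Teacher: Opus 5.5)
\textbf{The gap is in case a), and there it cannot be closed.} The step that fails is your claim that the limit rigid motion still lies in $\Wfett_z$ ``because the constraints are continuous in $H^1_\phi$''.

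The constraint satisfies $\int_{\omega_z}\rot\vfett\,dx=\int_{\partial\omega_z}\vfett\cdot\tfett\,ds$, so it sees the trace of $\vfett$ on the outer part of $\partial\omega_z$, where $\phiz=0$. Functions in a space with weight comparable to the first power of the distance have no trace there. The naive bound would need $\int_{\omega_z}\phiz^{-1}\,dx<\infty$, which diverges logarithmically. So this functional is unbounded on $H^1_\phi$, $\Wfett_z$ is not closed, and the limit can be a nonzero rotation.

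In fact this gives a counterexample to the inequality itself.
\begin{itemize}
\item[(1)] Let $\rfett_z(x)=(x-z)^\perp$ and $\psi_n=\chi_n\circ\phiz$, where $\chi_n=1$ on $[0,\epsilon_n]$, $\chi_n(s)=\log(s/\delta_n)/\log(\epsilon_n/\delta_n)$ on $[\epsilon_n,\delta_n]$, and $\chi_n=0$ on $[\delta_n,1]$. Take $\delta_n\to 0$ and $\epsilon_n=\delta_n e^{-n}$.
\item[(2)] The coarea formula on each triangle gives $\|\phiz^{\sfrac{1}{2}}D(\psi_n\rfett_z)\|_z^2\lesssim h_z^2\,(1/n+\delta_n^2)\to 0$.
\item[(3)] At the same time, $\int_{\omega_z}\rot(\psi_n\rfett_z)\,dx=\int_{\partial\omega_z}\rfett_z\cdot\tfett\,ds=2|\omega_z|$. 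Indeed, $\psi_n=1$ where $\phiz=0$, and on boundary edges through $z$ one has $\rfett_z\cdot\tfett=\pm(x-z)\cdot\nfett=0$.
\item[(4)] Hence $\vfett_n\coloneqq(1-\psi_n)\rfett_z-c_n$, with $c_n$ its mean, lies in $\Wfett_z$ of case a), for interior as well as Neumann nodes.
\item[(5)] It satisfies $\|\phiz^{\sfrac{1}{2}}\varepsilon(\vfett_n)\|_z=\|\phiz^{\sfrac{1}{2}}\varepsilon(\psi_n\rfett_z)\|_z\to 0$, while $\|\phiz^{\sfrac{1}{2}}D\vfett_n\|_z\to\|\phiz^{\sfrac{1}{2}}D\rfett_z\|_z=\sqrt{2}\,\|\phiz^{\sfrac{1}{2}}\|_z>0$.
\end{itemize}
So no $\CWKorn$ exists for $\Wfett_z$ as defined with the unweighted mean-rotation constraint.

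\textbf{Comparison with the paper.} The paper gives no argument of its own; it only cites \cite[Chapter 4.3]{error_estimator_pou}. So the real comparison is with the statement.

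The rest of your argument is sound: the scaling, compactness of $H^1_\phi\hookrightarrow L^2$ for a weight comparable to $\mathrm{dist}^1$, the Cauchy argument via the unconstrained weighted Korn inequality, and the rigid-motion check.

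\textbf{How to repair it.} Your route becomes correct once the rotational normalization in case a) is replaced by one that is bounded on $H^1_\phi$. Two options:
\begin{itemize}
\item[(i)] $\int_{\omega_z}\phiz\,\rot\vfett\,dx=0$, which is bounded by $\|\phiz^{\sfrac{1}{2}}\|_z\,\|\phiz^{\sfrac{1}{2}}D\vfett\|_z$;
\item[(ii)] the $L^2$-orthogonality $\int_{\omega_z}\vfett\cdot\rfett_3\,dx=0$, which is what the paper actually implements in Section 4.1.
\end{itemize}
With either choice your rigid-motion check goes through unchanged.

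Cases b) and c) are fine as you wrote them, with one addition for the tangential mean in c). Your remark ``away from the far endpoints'' does not cover that integral. Add a line: near a far endpoint $a$ of a gliding edge, the weight is comparable to $|x-a|$ on a sector adjacent to the edge. Radial integration then still gives an integrable trace, so this functional is also bounded on $H^1_\phi$.

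The unconstrained weighted Korn inequality remains the one nontrivial ingredient you import from the literature.
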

    
    \begin{theorem}[Efficiency of $\mulew$]
        \label{thm: efficiency of mulew}
        Let $z \in \Ncal$ be a node. Using the constant
        \begin{equation*}
            \Azmu \coloneqq \sup_{\vfett \in \Wfett_z \setminus \{\nullfett\}} \, \|\C^{\sfrac{1}{2}} \varepsilon(\phiz \vfett) \|_z \, \big/ \, \|\C^{\sfrac{1}{2}} \phiz^{\sfrac{1}{2}} \varepsilon(\vfett)\|_z \ < \infty
        \end{equation*}
       we bound $\muzew$ by
        \begin{equation}
            \label{eq: efficiency muzew}
            \muzew \leq \Azmu \| \C^{\sfrac{1}{2}} \epse \|_z  + C_z \, h_z^2 \, \left( |\kappa - \kappah|   + \kappa \, \|\efett\|_z \right),
        \end{equation}
        where $C_z \coloneqq \CPF \, \CWKorn \, C'$, with $C'$ being the constant that bounds $1/|\C_{min}| \coloneqq 1 / \min_{i,j,k,l} |\C_{ijkl}|$.
        Since this holds for every node $z \in \Ncal$, we obtain a global bound
        \begin{equation*}
            \label{eq: efficiency mulew}
            \mulew \leq \sqrt{6} \, C_A \, |||\efett||| + \sqrt{6} \, C_L \, h^2 \, \left( |\kappa - \kappah| + \kappa \, \|\efett\|_0 \right)
        \end{equation*}
        with $C_A \coloneqq \max_{z \in \Ncal} \, \Azmu$, $C_L \coloneqq \max_{z \in \Ncal} \, C_z$ and $h \coloneqq \max_{z \in \Ncal} \, h_z$. 
        This estimate shows that $\mulew$ is efficient, as the second term is of higher order.
    \end{theorem}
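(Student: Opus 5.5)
The argument follows the proof of Theorem \ref{thm: efficiency of etalew}, with $\Vfett_z$ replaced by $\Wfett_z$ and the weighted Korn inequality \eqref{eq: weighted Korn's inequality} added. Finiteness of $\Azmu$ is taken from \cite{error_estimator_pou}, as for $\Azeta$.

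Fix $z\in\Ncal$ and $\vfett\in\Wfett_z\setminus\{\nullfett\}$; by \eqref{eq: comp of muzew defi muzew neu} it suffices to bound $Res_z(\vfett)$ for such $\vfett$. As in \eqref{eq: proof eff etazew - Resz als Ceps(e) + khuh-ku}, I insert the weak formulation \eqref{eq: weak formulation a(u,v) = (ku,v)+rand} with test function $\phiz\vfett\in\Hfett$. This test function is admissible because $\phiz$ vanishes on $\partial\omega_z\cap\Omega$ and $\vfett$ satisfies the boundary conditions on $\Ecal_z$. It gives
\begin{equation*}
Res_z(\vfett)=\int_{\omega_z}\C\epse:\varepsilon(\phiz\vfett)\,dx+\int_{\omega_z}(\kappah\ufetth-\kappa\ufett)\cdot\phiz\vfett\,dx .
\end{equation*}
By Cauchy--Schwarz and the definition of $\Azmu$, the first term is at most $\Azmu\|\C^{\sfrac12}\epse\|_z\,\|\phiz^{\sfrac12}\C^{\sfrac12}\epsv\|_z$. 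The second term is at most $\|\kappa\ufett-\kappah\ufetth\|_z\,\|\phiz\vfett\|_z$.

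Next I reduce $\|\phiz\vfett\|_z$ to the weighted energy norm. Since $0\le\phiz\le1$, we have $\|\phiz\vfett\|_z\le\|\vfett\|_z$. In cases b) and c) we have $\Wfett_z=\Vfett_z$, and in case a) $\Wfett_z\subset\Vfett_z$, so \eqref{eq: weighted Poincare-Friedrichs} applies. The paper's version carries an extra factor $h_z$ here, giving $h_z^2$ overall; I would reproduce that step exactly as written in the proof of Theorem \ref{thm: efficiency of etalew}. This yields $\|\phiz\vfett\|_z\le \CPF h_z^2\|\phiz^{\sfrac12}D\vfett\|_z$. By \eqref{eq: weighted Korn's inequality} this is at most $\CPF\CWKorn h_z^2\|\phiz^{\sfrac12}\varepsilon(\vfett)\|_z$.

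The step I expect to be the main obstacle is passing from $\|\phiz^{\sfrac12}\varepsilon(\vfett)\|_z$ to $\|\phiz^{\sfrac12}\C^{\sfrac12}\varepsilon(\vfett)\|_z$. This needs a lower bound on $\C$ as a quadratic form on symmetric matrices, i.e.\ $\C\tau:\tau\ge c\,|\tau|^2$, where $c$ is controlled by the stiffness coefficients and is uniform in $\lambda\to\infty$ because $\C\tau:\tau=2\mu|\tau|^2+\lambda(\tr\tau)^2\ge2\mu|\tau|^2$. The constant $C'$ bounding $1/|\C_{min}|$ is exactly meant to absorb this. The result is $\|\phiz\vfett\|_z\le C_z h_z^2\|\phiz^{\sfrac12}\C^{\sfrac12}\epsv\|_z$ with $C_z=\CPF\CWKorn C'$.

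Dividing by $\|\phiz^{\sfrac12}\C^{\sfrac12}\epsv\|_z\neq0$ and taking the supremum gives $\muzew\le\Azmu\|\C^{\sfrac12}\epse\|_z+C_zh_z^2\|\kappa\ufett-\kappah\ufetth\|_z$. Then \eqref{eq: ku - kh uh Abschaetzung} yields \eqref{eq: efficiency muzew}. For the global bound I square, use Young's inequality $(a+b)^2\le2a^2+2b^2$, bound $\Azmu\le C_A$, $C_z\le C_L$ and $h_z\le h$, and use that each point lies in at most three patches. This gives $\mulew^2\le6C_A^2|||\efett|||^2+6C_L^2h^4\|\kappa\ufett-\kappah\ufetth\|_0^2$. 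Taking square roots and applying \eqref{eq: ku - kh uh Abschaetzung} once more finishes the proof.
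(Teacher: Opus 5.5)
Your proposal is the paper's proof step for step. You split $Res_z(\vfett)$ by testing the weak formulation with $\phiz\vfett$, and bound the energy term by Cauchy--Schwarz and the definition of $\Azmu$. For the $L^2$ term you use weighted Poincar\'e--Friedrichs, weighted Korn and a lower bound on $\C$. The global bound comes from Young's inequality and the threefold patch overlap. Your reading of $C'$ through the coercivity $\C\tau:\tau\ge 2\mu|\tau|^2$ (Lam\'e parameter $\mu$) is the sensible one: it gives $C'=(2\mu)^{-1/2}$ uniformly in $\lambda$. The paper's $1/\min_{ijkl}|\C_{ijkl}|$ is degenerate, since several entries of an isotropic $\C$ vanish.

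The gap is the one step you copied instead of proving: the second power of $h_z$. You rightly observed that $0\le\phiz\le1$ only gives $\|\phiz\vfett\|_z\le\|\vfett\|_z$. The paper's extra factor comes from $\|\phiz\vfett\|_z\le\|\phiz\|_z\,\|\vfett\|_z$, and that inequality is false. H\"older gives $\|\phiz\|_{L^\infty(\omega_z)}\|\vfett\|_z$, not a product of two $L^2$ norms. Scaling shows no such bound can hold: for $\vfett(x)=\hat{\vfett}((x-z)/h_z)$ with a fixed reference function, $\|\phiz\vfett\|_z$ is proportional to $h_z$, while $\|\phiz\|_z\|\vfett\|_z$ is proportional to $h_z^2$.

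The same scaling applies to $\sup_{\vfett\in\Wfett_z\setminus\{\nullfett\}}(\ffett,\phiz\vfett)_z/\|\phiz^{\sfrac{1}{2}}\C^{\sfrac{1}{2}}\varepsilon(\vfett)\|_z$. For $\ffett$ of fixed shape on the reference patch, this supremum is a fixed multiple of $h_z\|\ffett\|_z$. Hence any argument that controls the lower-order term through $\|\kappa\ufett-\kappah\ufetth\|_z$, as yours and the paper's do, yields at best $C_z\,h_z\,(|\kappa-\kappah|+\kappa\|\efett\|_z)$ locally. Globally it yields $\sqrt{6}\,C_L\,h\,(|\kappa-\kappah|+\kappa\|\efett\|_0)$. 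Reaching $h_z^2$ would need extra information, such as an $L^\infty(\omega_z)$ bound on $\kappa\ufett-\kappah\ufetth$, which the stated right-hand side does not contain.

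So your argument, like the paper's, proves the theorem only with $h_z$ in place of $h_z^2$ and $h$ in place of $h^2$. The efficiency conclusion is unaffected: $h\,(|\kappa-\kappah|+\kappa\|\efett\|_0)$ is still of higher order than $|||\efett|||$ by the a priori rates.
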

    \begin{proof}
        For the proof of the estimate of $\Azmu < \infty$ we refer to \cite[Theorem 4.6]{error_estimator_pou}.
        \\ 
        For any $\vfett \in W_z$, using similar computations as in \eqref{eq: proof eff etazew - Resz als Ceps(e) + khuh-ku}
        leads to
        \begin{align}
            \notag
            Res_z(\vfett)
            &\overset{\eqref{eq: proof eff etazew - Resz als Ceps(e) + khuh-ku}}{=} \int_{\omega_z} \C\, \epse:\varepsilon(\phiz \vfett) \, dx  + \int_{\omega_z}  (\kappah \ufetth - \kappa \, \ufett) \cdot \phiz \vfett \, dx\\
            \label{eq: Proof Eff of mulew - Resz Abschaetzung}
            &\overset{\eqref{eq: proof of mulew efficiency hilfsgleichung}}{\leq}  \| \C^{\sfrac{1}{2}} \epse \|_z \, \Azmu \, \| \C^{\sfrac{1}{2}} \phiz^{\sfrac{1}{2}} \varepsilon(\vfett)\|_z + \|\kappa\, \ufett - \kappah \ufetth\|_z \, \|\phiz \vfett\|_z,
        \end{align}
        where we used the following estimate in the last step
        \begin{align}
            \notag
            &\ \int_{\omega_z} \C\, \epse:\varepsilon(\phiz \vfett) \, dx 
            \overset{\CS}{\leq} \| \C^{\sfrac{1}{2}} \epse \|_z \, \| \C^{\sfrac{1}{2}} \varepsilon(\phiz \vfett)\|_z \\
            &\overset{\text{Def.\ } \Azmu}{\leq}  \| \C^{\sfrac{1}{2}} \epse \|_z \, \Azmu \, \| \C^{\sfrac{1}{2}} \phiz^{\sfrac{1}{2}} \varepsilon(\vfett)\|_z .
            \label{eq: proof of mulew efficiency hilfsgleichung}
        \end{align}
        We estimate $\|\phiz \vfett\|_z$ using the weighted Poincaré-Friedrichs inequality \eqref{eq: weighted Poincare-Friedrichs}, the weighted Korn's inequality \eqref{eq: weighted Korn's inequality} and the boundedness of $\C^{-1}$ to obtain
        \begin{align}
            \notag
            \|\phiz \, \vfett\|_z &\leq \|\phiz\|_z \, \|\vfett\|_z 
            \leq h_z \|\vfett\|_z 
            \overset{\eqref{eq: weighted Poincare-Friedrichs}}{\leq} \CPF \, h_z^2 \, \|\phiz^{\sfrac{1}{2}} \, D\vfett\|_z \\
            \label{eq: Proof Eff of mulew - z Norm aufteilen}
            &\overset{\eqref{eq: weighted Korn's inequality}}{\leq} \CPF \, \CWKorn \, h_z^2 \, \|\phiz^{\sfrac{1}{2}} \, \varepsilon(\vfett)\|_z 
            \leq C_z \, h_z^2 \, \|\phiz^{\sfrac{1}{2}} \, \C^{\sfrac{1}{2}} \varepsilon(\vfett)\|_z.
        \end{align}
        Applying \eqref{eq: Proof Eff of mulew - Resz Abschaetzung} and \eqref{eq: Proof Eff of mulew - z Norm aufteilen} yields
        \begin{equation*}
            Res_z(\vfett) \leq \Azmu \| \C^{\sfrac{1}{2}} \epse \|_z \, \| \C^{\sfrac{1}{2}} \phiz^{\sfrac{1}{2}} \varepsilon(\vfett)\|_z +C_z \, h_z^2 \, \|\kappa\,\ufett - \kappah \ufetth\| \,  \| \phiz^{\sfrac{1}{2}}  \C^{\sfrac{1}{2}}  \varepsilon(\vfett)\|_z.
        \end{equation*}
        We divide by $\|\phiz^{\sfrac{1}{2}} \,\C^{\sfrac{1}{2}}\, \varepsilon(\vfett)\|_z$ on both sides. This estimate holds for any $\vfett \in W_z$, it also holds for the supremum, thus the definition of $\muzew$ yields
        \begin{equation*}
             \muzew \leq \Azmu \| \C^{\sfrac{1}{2}} \epse \|_z  + C_z \, h_z^2 \, \| \kappa\, \ufett - \kappah \ufetth\|_z.
        \end{equation*}
        Combined with \eqref{eq: ku - kh uh Abschaetzung}, this estimate yields \eqref{eq: efficiency muzew}.
        To obtain a global version, we sum up over all nodes and use \eqref{eq: efficiency muzew}) and the definition of $C_A$
        \begin{align*}
            \mulew^2 &= \sum_{z \in \Ncal} \muzew^2 \overset{\eqref{eq: efficiency muzew}}{\leq} \sum_{z \in \Ncal} \left( \Azmu \| \C^{\sfrac{1}{2}} \epse \|_z  + C_z \, h_z^2 \, \| \kappa\, \ufett - \kappah \ufetth\|_z \right)^2 \\
            &\overset{\text{Def. } C_A}{\leq}\sum_{z \in \Ncal} 2 \, C_A^2 \| \C^{\sfrac{1}{2}} \epse \|^2_z \, + 2 \, C_z^2 \, h_z^4 \, \| \kappa\, \ufett - \kappah \ufetth\|_z^2 .
        \end{align*}
        Using the overlap by 3, the definitions of $C_L$ and $h$, this yields
        \begin{align*}
            \mulew &\leq \left( 6 \, C_A^2 \| \C^{\sfrac{1}{2}} \epse \|^2_0 + 6 \, C_L^2 \, h^4 \, \| \kappa\, \ufett - \kappah \ufetth\|_0^2 \right)^{\sfrac{1}{2}} \\
            &\overset{\eqref{eq: ku - kh uh Abschaetzung}}{\leq} \sqrt{6}\, C_A \, |||\efett|||  + \sqrt{6} \, C_L \, h^2 \, \left( |\kappa - \kappah| + \kappa \, \|\efett\|_0 \right),
        \end{align*}
        which proves the efficiency of $\mulew$.
    \end{proof}
    
    Next we state reliability and efficiency estimates with respect to the eigenvalue error.
    
    \begin{corollary}[Reliability and efficiency regarding the eigenvalue error]
        Under the same assumptions as in Theorems \ref{thm: reliability of mulew} and \ref{thm: efficiency of mulew}, there holds reliability of $\mulew$ as
        \begin{equation*}
            |\kappa - \kappah| \leq 24 \, \mulew^2 + \frac{\kappa^2 + \kappah^2}{\kappa} \, \|\efett\|_0^2 
        \end{equation*}
        and efficiency of $\mulew$ as
        \begin{equation*}
            \mulew^2 \leq 12 \, C_A^2 \, |\kappa - \kappah| + 24 \, C_L^2 \, h^4 \, |\kappa-\kappah|^2 + 12 \, \|\efett\|_0^2 \, \left( C_A^2 \, \kappa  +  2 \, C_L^2\, h^4  \kappa^2 \right).
        \end{equation*}
    \end{corollary}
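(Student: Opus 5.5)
The plan is to combine the global reliability and efficiency bounds for $\mulew$ from Theorems \ref{thm: reliability of mulew} and \ref{thm: efficiency of mulew} with the identities relating the energy error to the eigenvalue error. These are Proposition \ref{lemma: gedicke L3.1 Teil 1} and the first inequality in \eqref{eq: gedicke L3.1 Teil 2}. Apart from these, only elementary inequalities of the form $(a+b)^2\le 2a^2+2b^2$ are needed.

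\emph{Reliability.} By the Rayleigh--Ritz principle $\kappah\ge\kappa$, so $|\kappa-\kappah|=\kappah-\kappa\le|||\efett|||^2$ by the first inequality in \eqref{eq: gedicke L3.1 Teil 2}. Next I would square the reliability bound \eqref{eq: reliability mulew}. For safety I would use the weaker form $|||\efett|||\le 3\,\mulew+\frac{\kappa+\kappah}{2\sqrt{\kappa}}\|\efett\|_0$, which is what the last display in the proof of Theorem \ref{thm: reliability of mulew} literally produces. Squaring with $(a+b)^2\le 2a^2+2b^2$ gives
\begin{equation*}
|||\efett|||^2\le 18\,\mulew^2+\frac{(\kappa+\kappah)^2}{2\kappa}\|\efett\|_0^2 .
\end{equation*}
Since $(\kappa+\kappah)^2\le 2(\kappa^2+\kappah^2)$, the second term is at most $\frac{\kappa^2+\kappah^2}{\kappa}\|\efett\|_0^2$. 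Together with $18\le 24$ this yields the claimed reliability estimate. If one uses the factor $\sqrt3$ from \eqref{eq: reliability mulew} instead, the constant $6$ appears, which is again bounded by $24$.

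\emph{Efficiency.} I would square the global bound of Theorem \ref{thm: efficiency of mulew} and again use $(a+b)^2\le 2a^2+2b^2$. This gives
\begin{equation*}
\mulew^2\le 12\,C_A^2\,|||\efett|||^2+12\,C_L^2\,h^4\bigl(|\kappa-\kappah|+\kappa\|\efett\|_0\bigr)^2 .
\end{equation*}
For the first term I would insert Proposition \ref{lemma: gedicke L3.1 Teil 1}, namely $|||\efett|||^2=\kappa\|\efett\|_0^2+(\kappah-\kappa)=\kappa\|\efett\|_0^2+|\kappa-\kappah|$. This produces $12C_A^2|\kappa-\kappah|+12C_A^2\kappa\|\efett\|_0^2$. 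For the second term I would use $(|\kappa-\kappah|+\kappa\|\efett\|_0)^2\le 2|\kappa-\kappah|^2+2\kappa^2\|\efett\|_0^2$, which produces $24C_L^2h^4|\kappa-\kappah|^2+24C_L^2h^4\kappa^2\|\efett\|_0^2$. Collecting the $\|\efett\|_0^2$ terms gives exactly $12\|\efett\|_0^2\bigl(C_A^2\kappa+2C_L^2h^4\kappa^2\bigr)$, as stated.

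There is no genuine obstacle, since everything is bookkeeping. The only point needing care is the reliability constant: it must absorb either the factor $\sqrt3$ or the factor $3$ in front of $\mulew$, and $24$ covers both cases.
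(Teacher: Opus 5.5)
Your proposal is correct and takes the same route as the paper. The paper's proof combines the first inequality in \eqref{eq: gedicke L3.1 Teil 2} with Theorem~\ref{thm: reliability of mulew} for reliability, and Proposition~\ref{lemma: gedicke L3.1 Teil 1} with Theorem~\ref{thm: efficiency of mulew} for efficiency; your squaring steps fill in the omitted bookkeeping and reproduce the efficiency constants exactly. The argument in fact yields the constant $6$, or $18$ with the factor $3$ from the last display of the reliability proof, so the stated $24$ is a safe but non-sharp choice.
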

    \begin{proof}
        Reliability follows from the first inequality in \eqref{eq: gedicke L3.1 Teil 2} and the reliability of $\mulew$ in Theorem \ref{thm: reliability of mulew}.
        Efficiency follows from the efficiency of $\mulew$ in Theorem \ref{thm: efficiency of mulew} and Proposition \ref{lemma: gedicke L3.1 Teil 1}.
    \end{proof}
    
    \begin{remark}
        Note that for the efficiency all other terms than the eigenvalue error term $12 \, C_A^2 \, |\kappa - \kappah|$ are of higher order by \eqref{eq: a priori convergence rates}.
    \end{remark}


\subsection{Computation of $\etazew$ and $\muzew$}
\label{subsection: comp of etalew, mulew}

    The computation of $\etazew$ and $\muzew$ requires the approximation of suprema. 
    To compute a numerical approximation we reformulate these suprema into solutions of discrete local problems, similar to \cite[Chapter 5.1]{error_estimator_pou}.
    We denote the approximated estimators by $\etazewh$ and $\muzewh$, respectively.
    
    To approximate the local suprema we differ in the degree $k$ of our $\Pcal_k$ finite element (FE) from the eigenvalue problem and solve the local problems using $\Pcalkl$-FE, $\ell\geq 1$.

    We start with $\etazewh$.
    Using the space $\Vfett_z$ and the definition of $\etazew$ in \eqref{eq: comp of etazew defi etazew neu} the following problem can be formulated.
    
    \begin{problem}[$P_{\etazew}$]
        \label{problem: etazew}
    	Find $\wfett \in \Vfett_z \cap \Pcalkl \eqqcolon \Vfettzl$, $\wfett \neq \nullfett$ such that
    	\begin{equation*}
    	   \aetaz(\wfett, \vfett) = Res_z(\vfett) \quad \text{for all } \vfett \in \Vfettzl,
        \end{equation*}
        where
        \begin{equation*}
            \aetaz(\wfett, \vfett) \coloneqq \int\limits_{\omega_z} \phiz \, D\wfett : D\vfett \; dx.
        \end{equation*}
        
    \end{problem}
    
    This problem is well-posed as $Res_z (\einsfett) = Res_h (\phiz) = 0$ holds for constant functions.
    In the following lemma we see that $P_{\etazew}$ coincides with a discretization of \eqref{eq: comp of etazew defi etazew neu}, which yields $\etazewh$.
    
    \begin{lemma}
        \label{lemma: solving problem etazew}
        Problem $P_{\etazew}$ has a unique solution $\wfett \in \Vfettzl$ and $\etazewh = \aetaz(\wfett,\wfett)^{\sfrac{1}{2}}$.
    \end{lemma}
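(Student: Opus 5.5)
The plan is to treat $P_{\etazew}$ as a symmetric positive definite variational problem on the finite-dimensional space $\Vfettzl$, apply Lax--Milgram, and then identify the discrete supremum defining $\etazewh$ with the norm of the Riesz representative of $Res_z$ restricted to $\Vfettzl$. Here $\etazewh$ is understood as the discretization of \eqref{eq: comp of etazew defi etazew neu}, namely
\begin{equation*}
\etazewh \coloneqq \sup_{\vfett \in \Vfettzl \setminus \{\nullfett\}} \frac{Res_z(\vfett)}{\|\phiz^{\sfrac{1}{2}} D\vfett\|_z}.
\end{equation*}

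\textbf{Step 1: $\aetaz$ is an inner product on $\Vfettzl$.} Bilinearity and symmetry are clear, and $\aetaz(\vfett,\vfett) = \|\phiz^{\sfrac{1}{2}} D\vfett\|_z^2 \geq 0$. For definiteness, suppose $\aetaz(\vfett,\vfett)=0$. Since $\phiz>0$ on the open patch $\omega_z$, this forces $D\vfett = 0$ a.e., so $\vfett$ is constant on the connected set $\omega_z$. The constraints defining $\Vfett_z$ then give $\vfett=\nullfett$:
\begin{itemize}
\item[a)] the mean value of $\vfett$ vanishes;
\item[b)] $\vfett$ vanishes on a Dirichlet edge, or $\vfett\cdot\nfett=0$ holds on two non-parallel gliding edges at a corner;
\item[c)] $\vfett\cdot\nfett=0$ on the straight gliding edge, together with vanishing tangential mean.
\end{itemize}
A cleaner route is to invoke the weighted Poincaré--Friedrichs inequality \eqref{eq: weighted Poincare-Friedrichs}, which gives $\|\vfett\|_z \leq \CPF h_z \aetaz(\vfett,\vfett)^{\sfrac{1}{2}}$ directly. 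Hence $\aetaz$ is coercive on the finite-dimensional space $\Vfettzl$. The functional $Res_z$ is linear and continuous there, being a finite sum of element and edge integrals of polynomials. Lax--Milgram, or simply the Riesz representation theorem in $(\Vfettzl,\aetaz)$, therefore yields a unique solution $\wfett$.

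\textbf{Step 2: identify the supremum.} For every $\vfett \in \Vfettzl\setminus\{\nullfett\}$, Cauchy--Schwarz in the $\aetaz$ inner product gives
\begin{equation*}
Res_z(\vfett) = \aetaz(\wfett,\vfett) \leq \aetaz(\wfett,\wfett)^{\sfrac{1}{2}} \, \|\phiz^{\sfrac{1}{2}} D\vfett\|_z,
\end{equation*}
so $\etazewh \leq \aetaz(\wfett,\wfett)^{\sfrac{1}{2}}$. If $\wfett\neq\nullfett$, choosing $\vfett=\wfett$ gives $Res_z(\wfett)/\|\phiz^{\sfrac{1}{2}}D\wfett\|_z = \aetaz(\wfett,\wfett)^{\sfrac{1}{2}}$, so equality holds. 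If $\wfett=\nullfett$, then $Res_z$ vanishes on $\Vfettzl$ and both sides are zero.

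\textbf{Main obstacle.} The delicate point is the case distinction in Step 1, in particular showing that the subspace constraints exactly remove the kernel of $\aetaz$, i.e.\ that no nonzero constants lie in $\Vfettzl$. A related issue is that the remark $Res_z(\einsfett)=0$ is what makes the unconstrained formulation in case a) consistent with the constrained one. I would handle the kernel question by relying on \eqref{eq: weighted Poincare-Friedrichs}, which is valid on $\Vfett_z \supseteq \Vfettzl$, rather than redoing the case analysis.
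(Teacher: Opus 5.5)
Your proposal is correct and follows essentially the same route as the paper: Lax--Milgram (equivalently Riesz) for the symmetric form $\aetaz(\cdot,\cdot)$ on the finite-dimensional space $\Vfettzl$, followed by Cauchy--Schwarz in the $\aetaz$ inner product with equality attained at $\vfett=\wfett$. Your explicit check that $\aetaz(\cdot,\cdot)^{1/2}$ is definite on $\Vfettzl$ (via the constraints of $\Vfett_z$ or the weighted Poincar\'e--Friedrichs inequality), and your separate treatment of the case $\wfett=\nullfett$, fill in two points the paper leaves implicit when it simply appeals to equivalence of norms on discrete spaces.
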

    \begin{proof}[Proof.]
        As in the proof of {\cite[Theorem 5.1]{error_estimator_pou}}, we show that $P_{\etazew}$ has a unique solution. 
        Symmetry of $\aetaz(\cdot,\cdot)$ holds by definition.
        The continuity can be shown using the boundedness of $\phiz$ and $\C$ and a Cauchy-Schwarz inequality.\\ 
        Coercivity of $\aetaz(\cdot, \cdot)$ follows from the equivalence of norms on discrete spaces.
        Thus, we apply Lax Milgram (cf.\ e.g.\ \cite[Theorem 2.7.7]{brenner_scott}) to obtain that $P_{\etazew}$ has a unique solution $\wfett \in \Vfettzl$.\\
        To show the equivalence of $P_{\etazew}$ and \eqref{eq: comp of etazew defi etazew neu}, let $\vfett \in \Vfettzl \setminus \{ \nullfett \}$. 
        Then, using a Cauchy-Schwarz inequality ($\CS$) there holds
        \begin{equation*}
            \frac{Res_z(\vfett)^2}{\aetaz(\vfett,\vfett)} = \frac{\aetaz(\wfett, \vfett)^2}{\aetaz(\vfett,\vfett)} \overset{\CS}{\leq} \frac{\aetaz(\wfett,\wfett) \, \aetaz(\vfett,\vfett)}{\aetaz(\vfett,\vfett)} 
            = \aetaz(\wfett,\wfett).
        \end{equation*} 
        This holds for any $\vfett \in \Vfettzl \setminus \{ \nullfett \}$. In particular, equality holds for $\vfett =\wfett$. Therefore
        \begin{equation*}
            \etazewh = \sup_{\vfett \in \Vfettzl \setminus \{\nullfett\}} \frac{Res_z(\vfett)}{\aetaz(\vfett,\vfett)^{\sfrac{1}{2}}} = \aetaz(\wfett,\wfett)^{\sfrac{1}{2}}.
        \end{equation*}
    \end{proof}
    
    Considering $\muzewh$, we reformulate \eqref{eq: comp of muzew defi muzew neu} to approximate the supremum as a solution of the following problem.
    \begin{problem}[$P_{\muzew}$]
        \label{problem: muzew}
    	Find $\wfett \in \Wfett_z \cap \Pcalkl \eqqcolon \Wfettzl$, $\wfett \neq \nullfett$ such that
    	\begin{equation*}
    	   \amuz(\wfett, \vfett) = Res_z(\vfett) \quad \text{for all } \vfett \in \Wfettzl, 
    	\end{equation*}
    	where 
    	\begin{equation*}
    	     \amuz(\wfett, \vfett) \coloneqq \int\limits_{\omega_z} \phiz \, \varepsilon(\wfett) : \C  \varepsilon(\vfett) \; dx.
    	\end{equation*}
    \end{problem}
    
    To ensure that the problem is well-posed, we need to consider rigid body motions, as they coincide with the kernel of $\varepsilon(\cdot)$.
    Let $RM(\Omega)$ denote the space of rigid body motions on $\Omega$.
    To ensure $Res_z(\rfett) = Res_h (\phiz \, \rfett)=0$ holds for any $\rfett \in RM(\Omega)$, we need our test functions to fulfill $\phiz \, \rfett \in \Hfett_h$.
    Thus we can only consider polynomials of degree $k \geq 2$.
    For the lowest order case we then use $\etalew$.
    The following lemma shows that solving problem $P_{\muzew}$ is equivalent to computing the discretized supremum of
    \eqref{eq: comp of muzew defi muzew neu}.
    
    \begin{lemma}
        \label{lemma: solving problem muzew}
        Assume that for any $\rfett \in RM(\Omega)$ and any hat function $\phiz$ we have that $\rfett \,\phiz \in \Hfett_h$. 
        Then problem $P_{\muzew}$ has a unique solution $\wfett \in \Wfettzl$ and $\muzewh = \amuz(\wfett,\wfett)^{\sfrac{1}{2}}$.
    \end{lemma}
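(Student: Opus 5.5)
I will follow the structure of the proof of Lemma \ref{lemma: solving problem etazew}, adapting it to the bilinear form $\amuz(\cdot,\cdot)$ and the space $\Wfettzl$. The argument has three parts: show that $\amuz$ is an inner product on $\Wfettzl$, apply Lax--Milgram, and identify the discrete supremum with $\amuz(\wfett,\wfett)^{\sfrac{1}{2}}$.

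\emph{Symmetry and continuity.} Symmetry of $\amuz(\cdot,\cdot)$ follows from the symmetry of $\C$, since $\varepsilon(\wfett):\C\varepsilon(\vfett)=\C\varepsilon(\wfett):\varepsilon(\vfett)$. Continuity on $\Wfettzl$ with respect to the $H^1(\omega_z)$-norm follows from $0\le\phiz\le 1$, the boundedness of $\C$, $|\varepsilon(\vfett)|\le|D\vfett|$ and a Cauchy--Schwarz inequality.

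\emph{Coercivity.} This is the step I expect to be the main obstacle, because $\amuz(\vfett,\vfett)=\|\phiz^{\sfrac{1}{2}}\C^{\sfrac{1}{2}}\epsv\|_z^2$ vanishes on rigid body motions.
\begin{enumerate}
\item[(i)] I first show that the kernel of $\amuz$ on $\Wfettzl$ is trivial. If $\amuz(\vfett,\vfett)=0$, then $\phiz>0$ on $\omega_z$ and positive definiteness of $\C$ give $\epsv=0$ on the connected patch $\omega_z$. Hence $\vfett$ is a rigid body motion, $\vfett=\mathbf{a}+b\,(-y,x)^T$.
\item[(ii)] In case a) of Definition \ref{def: Wz fuer muzew}, $\int_{\omega_z}\rot\vfett\,dx=2b|\omega_z|=0$ forces $b=0$, and the mean-value condition then forces $\mathbf{a}=\nullfett$.
\item[(iii)] In cases b) and c), a rigid motion satisfying the boundary constraints on $\Ecal_z\cap\Ecal_D$ or $\Ecal_z\cap\Ecal_g$ must vanish. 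For the Dirichlet case, $\vfett$ vanishes on an edge. For a gliding corner, the normal component vanishes on two non-parallel edges. In case c), the normal component vanishes on a straight edge and the tangential mean vanishes.
\item[(iv)] Alternatively, these cases are exactly what the weighted Korn inequality \eqref{eq: weighted Korn's inequality} together with \eqref{eq: weighted Poincare-Friedrichs} yields: $\|\vfett\|_{H^1(\omega_z)}\lesssim\|\phiz^{\sfrac{1}{2}}\C^{\sfrac{1}{2}}\epsv\|_z$. This gives coercivity directly, even beyond the discrete space.
\item[(v)] On the finite-dimensional space $\Wfettzl$, positive definiteness implies coercivity by equivalence of norms, as in Lemma \ref{lemma: solving problem etazew}.
\end{enumerate}
Lax--Milgram then yields a unique $\wfett\in\Wfettzl$ solving $P_{\muzew}$, because $Res_z$ is a bounded linear functional on $\Wfettzl$.

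\emph{Role of the assumption.} The hypothesis $\rfett\,\phiz\in\Hfett_h$ gives $Res_z(\rfett)=Res_h(\phiz\rfett)=0$ for every $\rfett\in RM(\Omega)$. So $Res_z$ annihilates the kernel of $\varepsilon$. This makes the restriction of the supremum in \eqref{eq: comp of muzew defi muzew neu} to $\Wfett_z$ legitimate: any $\vfett$ may be shifted by a rigid motion, without changing numerator or denominator, into a function satisfying the normalization conditions of $\Wfett_z$. Consequently the discrete quantity $\muzewh$ is the supremum over $\Wfettzl\setminus\{\nullfett\}$. I will note that in case a) this shift needs $\rfett$ ranging over rigid motions, which are polynomials of degree one and hence lie in $\Pcalkl$.

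\emph{Identification of the supremum.} For $\vfett\in\Wfettzl\setminus\{\nullfett\}$,
\begin{equation*}
\frac{Res_z(\vfett)^2}{\amuz(\vfett,\vfett)}=\frac{\amuz(\wfett,\vfett)^2}{\amuz(\vfett,\vfett)}\le\amuz(\wfett,\wfett)
\end{equation*}
by the Cauchy--Schwarz inequality for the inner product $\amuz$, with equality at $\vfett=\wfett$. Hence $\muzewh=\amuz(\wfett,\wfett)^{\sfrac{1}{2}}$. If $Res_z$ vanishes on $\Wfettzl$, then $\wfett=\nullfett$ and both sides are zero, so the identity still holds.
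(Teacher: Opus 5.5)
Your proposal is correct and follows the same route as the paper. The paper's proof simply defers to that of Lemma~\ref{lemma: solving problem etazew}: symmetry, continuity, coercivity from norm equivalence on the finite-dimensional space, Lax--Milgram, and the Cauchy--Schwarz identification of the supremum. Your kernel argument in (i)--(iii) usefully supplies the positive definiteness of $\amuz$ on $\Wfettzl$ that this norm-equivalence step tacitly needs.

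The one thing to drop is remark (iv). The weighted Korn and Poincar\'e--Friedrichs inequalities control $\|\vfett\|_z$ and the weighted quantity $\|\phiz^{1/2} D\vfett\|_z$, but not $\|D\vfett\|_z$, because $\phiz$ degenerates on $\partial\omega_z \setminus \partial\Omega$. So there is no $H^1(\omega_z)$-coercivity beyond the discrete space, and step (v) is genuinely needed.
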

    \begin{proof}[Proof.]
    The proof follows the lines of proof of Lemma~\ref{lemma: solving problem etazew}.
    \end{proof}
    
    Thus we can compute the discrete error estimators $\etazewh$ and $\muzewh$ by solving the local problems $P_{\etazew}$ and $P_{\muzew}$ on local patches, respectively.
 
    This concludes the theoretical analysis of the error estimators. 
    

\section{Numerical experiments}
\label{section: experiments}
    
    In this section we present numerical results of eigenvalues on different domains. 
    Prior to this we look at different forms of patches $\omega_z$ that can occur in the triangulation of the domain.
    We show how to incorporate the constraints in Definitions \ref{def: Vz fuer etazew} and \ref{def: Wz fuer muzew} on the solution spaces $\Vfettzl$ and $\Wfettzl$, respectively.
    
\subsection{Requirements on different patches}
    
    As in Definitions \ref{def: Vz fuer etazew} and \ref{def: Wz fuer muzew}, the solution spaces $\Vfettzl$ and $\Wfettzl$ depend on the location of $z$ and its patch $\omega_z$.
    In particular, we consider the edges that $z$ lies on and use them to differentiate between different types of patches.
    Either $z$ is an interior node or $z$ lies on the boundary.
    If $z$ lies on the boundary, only the two edges on the boundary attached to $z$ inherit the boundary condition of $\partial \Omega$.
    The other edges on $\partial \omega_z$ are left free.
    In general, we have to fix rigid body motions to ensure unicity.
    Therefore, we get the conditions
    \[
        \int_{\omega_z} \vfetth \, dx = \nullfett \quad \text{for } \vfetth \in \Vfettzl,
    \]
    \[
        \int_{\omega_z} \vfetth \, dx = \nullfett \quad \text{and} \quad \int_{\omega_z} \rot \vfetth \, dx = \nullfett \quad \text{for } \vfetth \in \Wfettzl,
   \]
    which can be rewritten using rigid body motions 
    \begin{equation*}
        \rfett_1 = \eoneintwod, \ \rfett_2 = \etwointwod, \ \rfett_3 = \vectorrot \in \RBM (\Omega).
    \end{equation*}
    considering
    \begin{equation*}
         \int_{\omega_z} \vfetth \cdot \rfett_i \, dx = 0,
    \end{equation*}
    with $i=1,2$ for $\etazewh$ and $i=1,2,3$ for $\muzewh$.
    
    Due to the different boundary conditions, we further reduce these conditions depending on the given boundary condition and the position of the patch.
    We differentiate between the same three cases as in Definitions \ref{def: Vz fuer etazew} and \ref{def: Wz fuer muzew}.
    
    \begin{enumerate}[a)]
    \item
    The patch seen in Figure \ref{tikz: patches inner node} lies inside of $\Omega$.
    This means that all edges of this patch have Neumann boundary conditions, i.e.\ $\sigmauh \, \nfett = \nullfett$.
    Hence, we have to consider all conditions for this patch.
    Accordingly, the same holds for patches on the boundary that have edges with Neumann boundary condition.
    
    \begin{figure}[H]
    	\centering

\begin{tikzpicture}[rotate = 0, scale = 0.4]
        \draw (50:5cm) -- (140:4.5cm);
        \draw (140:4.5cm) -- (225:4cm);
        \draw (225:4cm) -- (315:5cm);
        \draw (315:5cm) -- (50:5cm);
        \draw (50:0cm) -- (50:5cm);
        \draw (140:0cm) -- (140:4.5cm);
        \draw (225:0cm) -- (225:4cm);
        \draw (315:0cm) -- (315:5cm);
        \draw (0:0cm) node [shape=circle,draw, fill, inner sep = 0pt, minimum size = 8, color = hellgrau] {};
        \node at (90:0.8cm) [color = hellgrau]{$z$};
        
        \draw [-{Stealth[length=0.25cm]}][color = petrol][thick] (-2cm,-4cm) -- (2cm,-4cm);
        \node at (1cm,-4.8cm) [color = petrol] {\large{$\rfett_1$}};
        
        \draw [-{Stealth[length=0.25cm]}][color = petrol][thick] (4cm,-2cm) -- (4cm,2cm);
        \node at (4.8cm,1cm) [color = petrol] {\large{$\rfett_2$}};
        
        \draw [-{Stealth[length=0.25cm]}][color = petrol][thick] (345:6cm) arc (320:400:3cm);
        \node at (375:7.2cm) [color = petrol] {\large{$\rfett_3$}};
        
        \path (180:5cm) -- (180:0cm);
    \end{tikzpicture}
    	\caption{An inner patch $\omega_z$ with $\omega_z \cap \partial \Omega = \emptyset$ or $\omega_z \cap \partial \Omega \subset \GammaN$. The blue arrows show the possible rigid body motions.}
    	\label{tikz: patches inner node}
    \end{figure}
    
    \item
    Regarding Dirichlet boundary conditions we have a fixed value $\ufetth = \nullfett$ on those edges.
    Hence, we do not have to implement any constraints for those patches.
    Note that this holds even if only one of the edges attached to $z$ inherits a Dirichlet boundary condition.
    This case also contains a corner point for gliding boundary conditions.
    There, the gliding boundary condition $\ufetth \cdot \nfett = 0$ has to be fulfilled for the outer normals of both corner edges, thus $\ufetth (z) = \nullfett$.
    As no rigid body motion can be added to $\ufetth$ on this patch, we already obtain unicity of the solution, so we do not need to consider any of the conditions.
    An example of such a patch can be seen in Figure \ref{subfig: patches ecke}.
    \begin{figure}[H]
    	\centering

    \begin{tikzpicture}[rotate = 0, scale = 0.4]
        \draw [color = hellgrau](280:4cm) -- (225:4.5cm);
        \draw [color = hellgrau](225:4.5cm) -- (155:4cm);
        \draw [color = hellgrau](225:0cm) -- (225:4.5cm);
        \draw [thick, color = black][densely dashdotted](155:0cm) -- (155:4cm);
        \draw [thick, color = black][densely dashdotted](280:0cm) -- (280:4cm);
        
        \node (upperedge) at (155:2cm) [shape=circle,  inner sep = 0pt, minimum size = 0, color = blue] {};
        \node (loweredge) at (280:2cm) [shape=circle,  inner sep = 0pt, minimum size = 0, color = blue] {};
        \draw (0:0cm) node [shape=circle,draw, fill, inner sep = 0pt, minimum size = 8, color = gray] {};
        \node at (20:0.8cm) [color = gray] {$z$};
        

    \end{tikzpicture}
        \caption{Patch $\omega_z$ with dashed boundary edges for a polygonal boundary $\omega_z \cap \partial \Omega$.}
        \label{subfig: patches ecke}
    \end{figure}

    \item
    Lastly we consider patches with axis parallel gliding boundary edges that are not on a corner.
    
    In Figure \ref{subfig: patch x red} we automatically fulfill $\int_{\omega_z} \ufetth \cdot \rfett_2 \, dx = 0$, as the normal on the boundary is $\nfett = (0, \ 1)^T = \rfett_2$. 
    Therefore we only need to consider $\rfett_1$ in the integral condition for both $\etazewh$ and $\muzewh$, as $\rfett_3$ is eliminated by $\ufetth \cdot \nfett = 0$.
    
    \begin{figure}[H]
    	\centering
    	\begin{subfigure}[h]{0.49\textwidth}
            \centering

    \begin{tikzpicture}[rotate = 0, scale = 0.4]
    
        \draw [thick, color = black][densely dashdotted](180:5cm) -- (0:5cm);
        \draw [color = hellgrau](180:5cm) -- (142:4.5cm) -- (92:4cm) -- (40:4.5cm) -- (0:5cm);
        \draw [color = hellgrau](142:0cm) -- (142:4.5cm);
        \draw [color = hellgrau](92:0cm) -- (92:4cm);
        \draw [color = hellgrau](40:0cm) -- (40:4.5cm);
        
        \draw[-{Stealth[length=0.25cm]}][color = petrol][thick] (-2.5cm,-1.4cm) -- (2.5cm,-1.4cm);
        \node at (1.7cm,-2.3cm) [color = petrol] {\large{$\rfett_1$}};
        \draw (0:0cm) node [shape=circle,draw, fill, inner sep = 0pt, minimum size = 8, color = gray] {};
        \node at (220:0.8cm) [color = gray]{$z$};
    \end{tikzpicture}
            \caption{}
            \label{subfig: patch x red}
        \end{subfigure}
        \begin{subfigure}[h]{0.49\textwidth}
            \centering

    \begin{tikzpicture}[rotate = 90, scale = 0.4]
        \draw [thick, color = black][densely dashdotted](180:5cm) -- (0:5cm);
        \draw [color = hellgrau](180:5cm) -- (142:4.5cm) -- (92:4cm) -- (40:4.5cm) -- (0:5cm);
        \draw [color = hellgrau](142:0cm) -- (142:4.5cm);
        \draw [color = hellgrau](92:0cm) -- (92:4cm);
        \draw [color = hellgrau](40:0cm) -- (40:4.5cm);
        
        \draw [-{Stealth[length=0.25cm]}][color = petrol][thick] (-2.5cm,-1.4cm) -- (2.5cm,-1.4cm);
        \node at (1.5cm,-2.3cm) [color = petrol] {\large{$\rfett_2$}};
        \draw (0:0cm) node [shape=circle,draw, fill, inner sep = 0pt, minimum size = 8, color = gray] {};
        \node at (220:0.8cm) [color = gray]{$z$};
    \end{tikzpicture}
            \caption{}
            \label{subfig: patch y red}
        \end{subfigure}
    	\caption{Patches $\omega_z$ with dashed boundary edges describing $\Ecal_z \cap \Ecal_g$, where the boundary is parallel to (a) the x-axis and (b) the y-axis.
        The blue arrows show the possible rigid body motions in each case.}
    	\label{tikz: patches x y red}
    \end{figure}
    
    Similarly, we do not need to consider $\rfett_1$ and $\rfett_3$ in the integral condition for patches as in Figure \ref{subfig: patch y red}, as the outer normal equals $\nfett = (1,\ 0)^T = \rfett_1$.
    
    \end{enumerate}


In the following subsections we present numerical results on three different domains: the unit square, an L-shaped domain, and Cook's membrane.
Our computations were done using Matlab.


\subsection{Experiments on the unit square}

    We conduct our experiments on the unit square $\Omega = (0,1)^2 \subset \R^2$ and examine all three types of boundary conditions starting with Neumann boundary conditions.
    
    For this experiment we consider finite elements with polynomial degrees $1$ for $\etalew$ and $2$ for $\mulew$, and with varying polynomial degrees for the local problems.
    
    In Figure \ref{fig: square neumann efficiency p1 p2 (+0,1,2,3)} we see that the efficiency index already stabilises for $\Pcal_{1+1}$- and $\Pcal_{2+1}$-FE, respectively. 
    As just one more polynomial degree for the local problems suffices, we will use $\Pcal_{k+1}$-FE in the following experiments (if not stated otherwise).

    \begin{figure}[tbp]
        \centering
        \begin{subfigure}[b]{0.49\textwidth}
            \centering
            \includegraphics[width=1\textwidth]{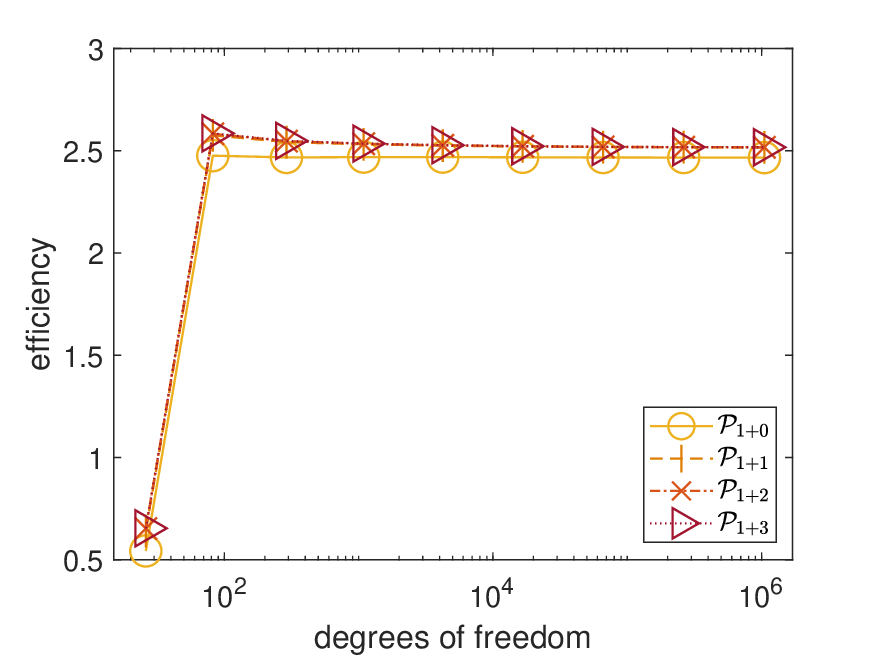}
            \caption{}
            \label{subfig: square neumann eff p1}
        \end{subfigure}
        \begin{subfigure}[b]{0.49\textwidth}
            \centering
            \includegraphics[width=1\textwidth]{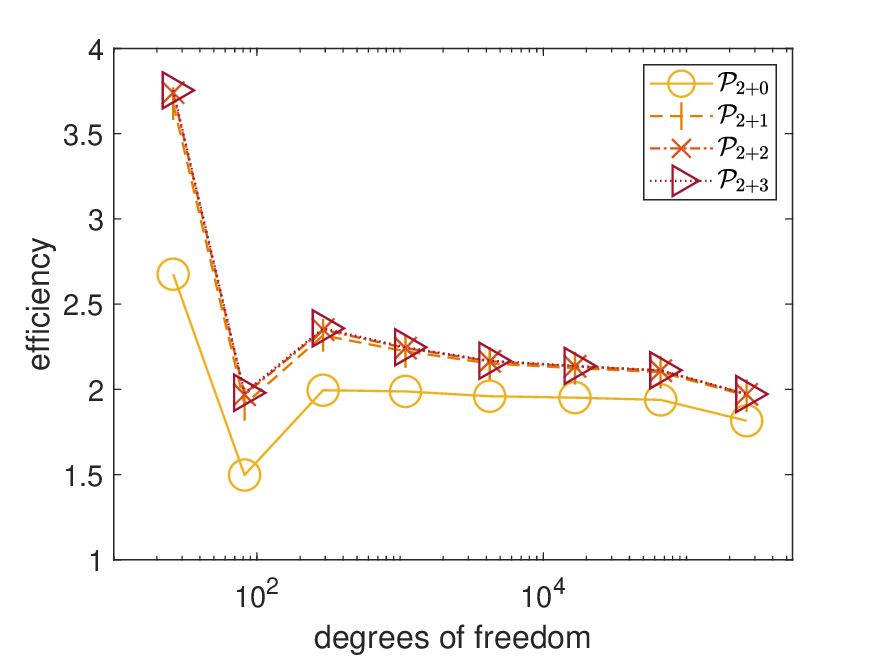}
            \caption{}
            \label{subfig: square neumann eff p2}
        \end{subfigure}
        \caption{Efficiency index of (a) $\etalew^2$ using $\Pcal_1$-FE and (b) $\mulew^2$ using $\Pcal_2$-FE with different local polynomial degrees on the unit square with Neumann boundary conditions.}
        \label{fig: square neumann efficiency p1 p2 (+0,1,2,3)}
    \end{figure}

       \begin{figure}[tbp]
        \centering
        \includegraphics[width=0.9\textwidth]{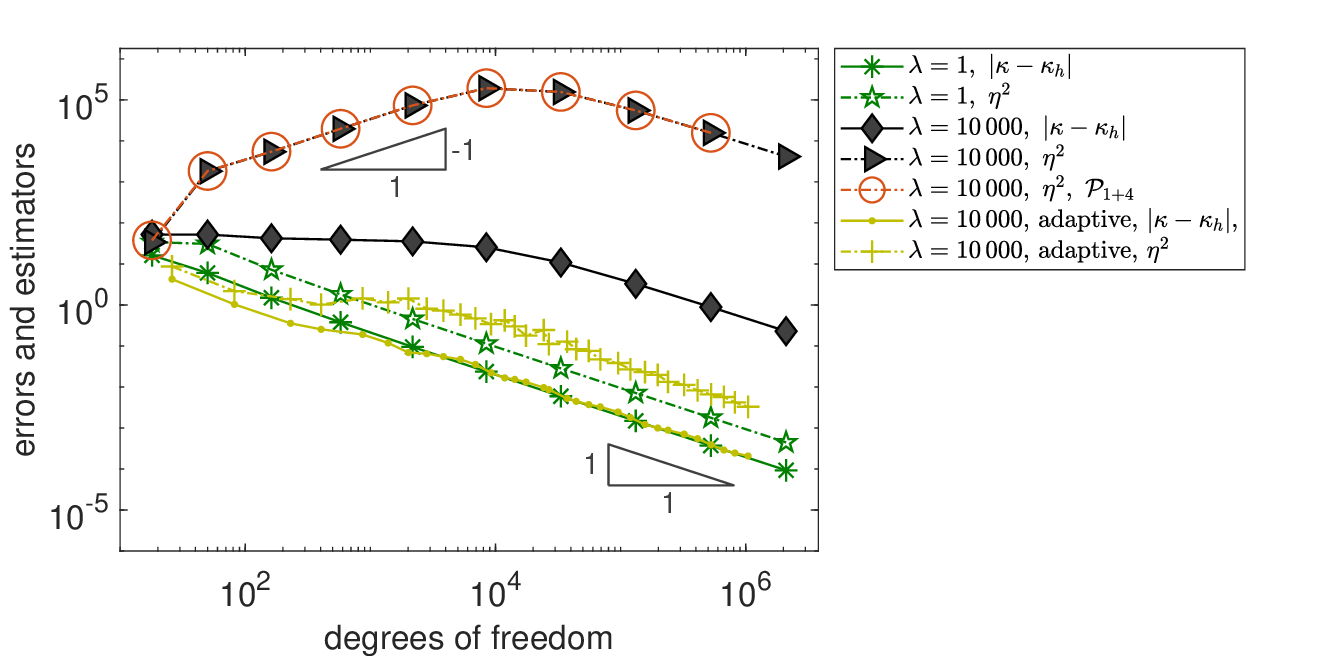}
        \caption{Locking on the square with gliding boundary conditions for $\lambda =1 $ and $\lambda = 10\,000$ using $\Pcal_1$-FE for adaptive and uniform criss meshes.}
        \label{fig: square sb locking p1}
    \end{figure}
    \begin{figure}[tbp]
        \centering
        \includegraphics[width=0.9\textwidth]{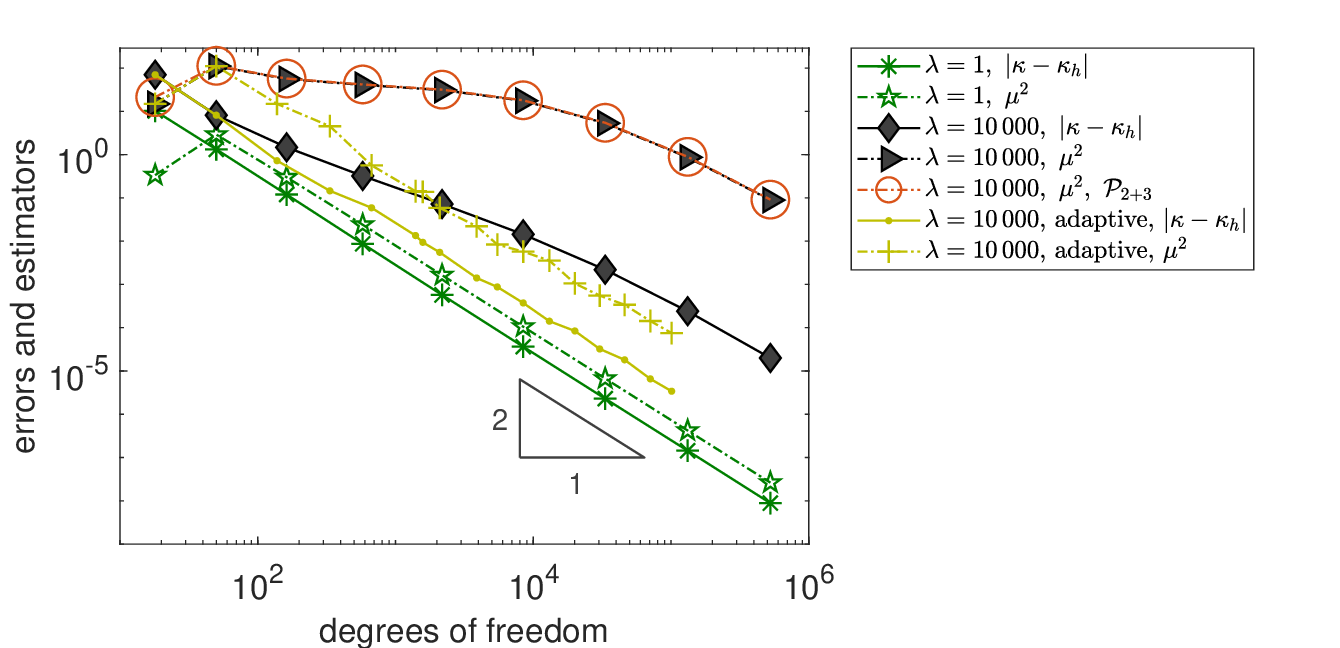}
        \caption{Locking on the square with gliding boundary conditions for $\lambda =1 $ and $\lambda = 10\,000$ using $\Pcal_2$-FE for adaptive and uniform criss meshes.}
        \label{fig: square sb locking p2}
    \end{figure}

    We also consider a locking benchmark on the square. 
    For squares with gliding boundary conditions the eigenpairs are given as s-modes and p-modes in \cite[Chapter 2.3]{jones_modes}.
    
    The s-modes and eigenvalues are independent of the critical Lamé parameter $\lambda$.
    Applied to $\Omega=(0,1)^2$ they are defined as
    \begin{align*}
        \ufett_s((x,y)^T) &\coloneqq \Big( n \,\sin \big( m \pi x\big)\, \cos \big( n \pi y \big),\ -m \,\cos \big( m \pi x \big)\, \sin \big( n \pi y \big) \Big)^T, \\
        \kappa_s &\coloneqq \mu \pi^2 \big( m^2 + n^2 \big), \ m, n = 1,2,\dots.
    \end{align*}
    
    The first eigenvalue is an s-eigenvalue given by
    \begin{equation*}
        \kappa_1 =2 \mu \pi^2,
    \end{equation*}
    where $m=n=1$.
    Due to its independence, we can increase the Lamé parameter $\lambda$ without affecting $(\kappa_1, \ufett_1)$ to demonstrate the locking phenomenon.

    The results using different grid structures for both $\Pcal_1$- and $\Pcal_2$-FE are shown in Figures \ref{fig: square sb locking p1} and \ref{fig: square sb locking p2}, respectively.
    The convergence rates are plotted with respect to the degrees of freedom.
    Note that in two dimensions on uniformly refined grids the relation $h \approx N^{-1/2}$ holds for the mesh size $h$ and the degrees of freedom $N$.
    
    Both figures show that on structured criss meshes locking occurs both in the error and in the estimators.
    Note that the estimators worsen their preasymptotic convergence rates by one degree more compared to the eigenvalue error.
    This is however not due to locking of the local problems, as we see in both figures that using $\Pcal_{5}$-FE for the local problems for $\etazewh$ and $\muzewh$, respectively, does not change this behavior.
    
    Using adaptive mesh refinement that breaks the criss structure also breaks the locking behavior, similar to the known effect on criss-cross meshes.
    
    Lastly we look at higher eigenvalues for Dirichlet boundary conditions.
    In particular, we computed the eigenvalues with indices $m=1,\dots,4,12,13$, which are simple eigenvalues.
    In Figure \ref{fig: square dirichlet several eigenvalues} we see that computing the eigenvalues $\kappa_1,\kappa_2$ and $\kappa_4$ does not yield the optimal convergence rates as the corresponding eigenfunctions are not smooth.
    Note that the absolute eigenvalue error increases for higher eigenvalues.\\
    The von Mises stresses of $\ufett_{12}$ and $\ufett_{13}$ can be seen in Figure \ref{fig: square dirichlet von mises}.
    
    \begin{figure}[tbp]
        \centering
        \includegraphics[width=0.7\textwidth]{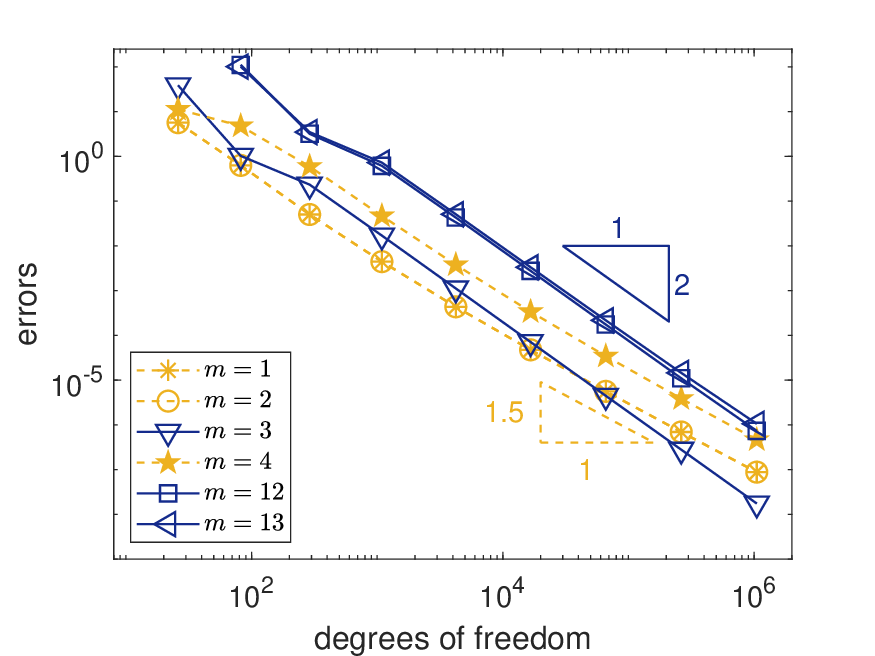}
        \caption{Error for different eigenvalues $\kappa_m$, $m=1,\dots,4,12,13$ on the unit square with Dirichlet boundary using $\Pcal_2$-FE.}
        \label{fig: square dirichlet several eigenvalues}
    \end{figure}
    
    \begin{figure}[tbp]
        \centering
        \begin{subfigure}[b]{0.49\textwidth}
            \centering
            \includegraphics[width=1\textwidth]{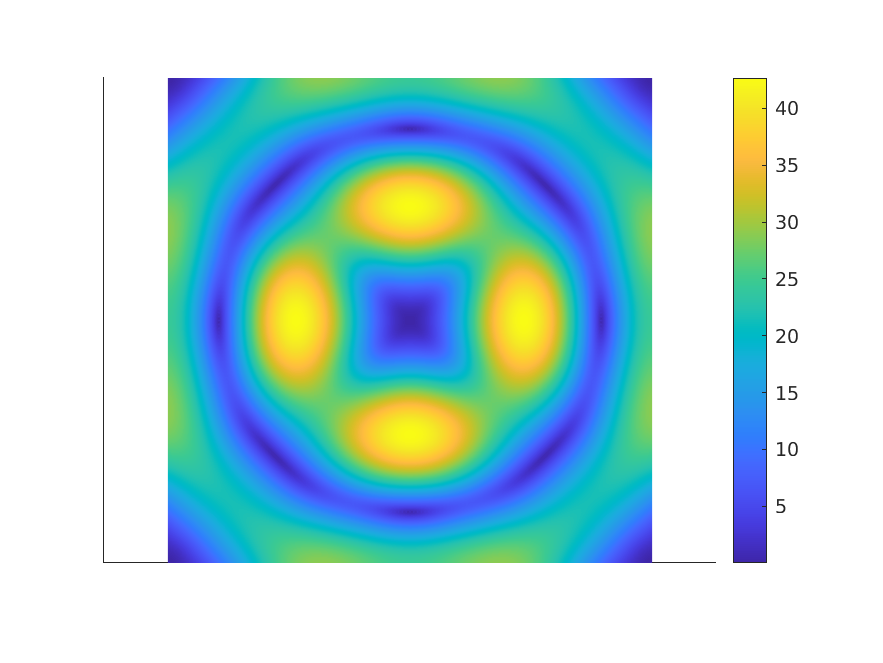}
            \caption{$m=12$}
            \label{subfig: square dirichlet von mises m=12}
        \end{subfigure}
        \begin{subfigure}[b]{0.49\textwidth}
            \centering
            \includegraphics[width=1\textwidth]{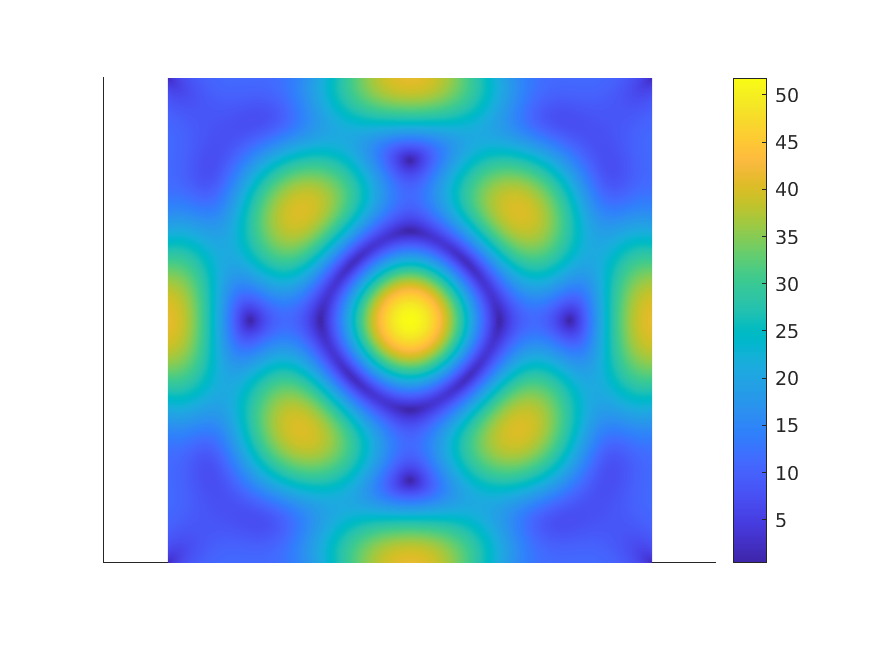}
            \caption{$m=13$}
            \label{subfig: square dirichlet von mises m=13}
        \end{subfigure}
        
        \caption{Von Mises stress for the unit square with Dirichlet boundary for different eigenfunctions $\ufett_m$ computed with $\Pcal_2$-FE.}
        \label{fig: square dirichlet von mises}
    \end{figure}


\subsection{Experiments on an L-shaped domain}
    
    In this subsection we compare uniform and adaptive mesh refinement as well as the estimator $\mulew^2$ to a residual estimator \cite{cg_2011} for Dirichlet and Neumann boundary conditions
    \begin{equation*}
        \etares^2 \coloneqq \sum_{T \in \Tcal} h_T^2 \, \|\kappah \ufetth + \divsigmauh\|^2_{0,T} + \sum_{E \in \Ecal} h_E \, \|[\sigmauh]\; \nfett_E\|^2_{0,E},
    \end{equation*}
    on a domain that contains a re-entrant corner, in particular an L-shaped domain.

    The L-shaped domain with Neumann boundary conditions used for the computations is given by $\Omega = [-1,1]^2 \setminus (0,1)\times(-1,0) \subset \R^2$.
    Due to the Neumann boundary conditions, the first three eigenvalues corresponding to the rigid body motions are equal to zero.
    Thus, we compute the smallest nonzero eigenvalue $\kappa_4$.
    
    Figure \ref{subfig: lshape neumann plot} shows that for $\Pcal_3$-FE the optimal convergence rate is attained for adaptive refined meshes, but not for uniformly refined ones.
    A comparison of the efficiency index of $\etares^2$ and $\mulew^2$ in Figure \ref{subfig: lshape neumann efficiency} shows that the efficiency constant of $\mulew^2$ is closer to 1 by two orders of magnitude.
    
    \begin{figure}[tbp]
        \centering
        \begin{subfigure}[b]{0.49\textwidth}
            \centering
            \includegraphics[width=1\textwidth]{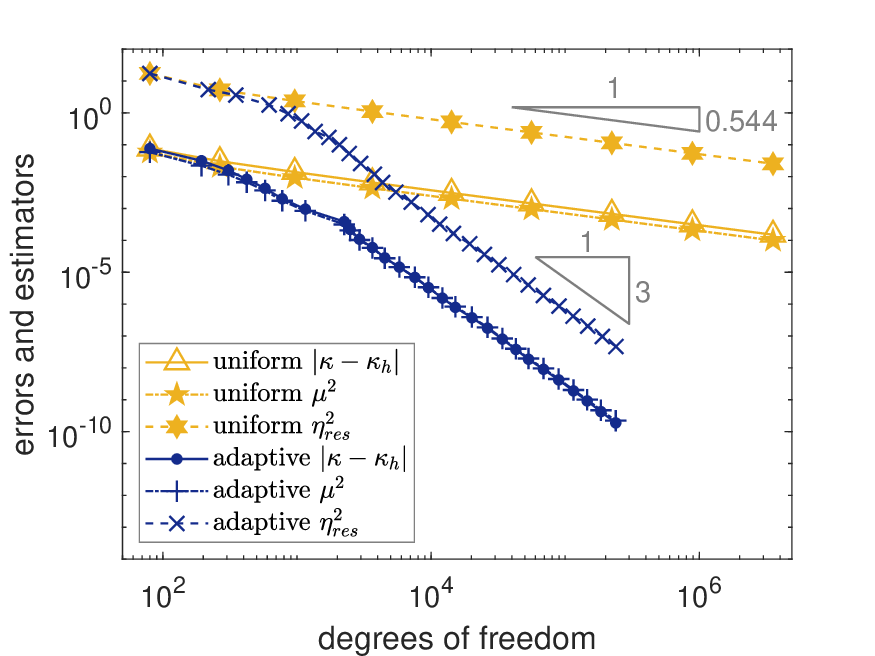}
            \caption{}
            \label{subfig: lshape neumann plot}
        \end{subfigure}
        \begin{subfigure}[b]{0.49\textwidth}
            \centering
            \includegraphics[width=1\textwidth]{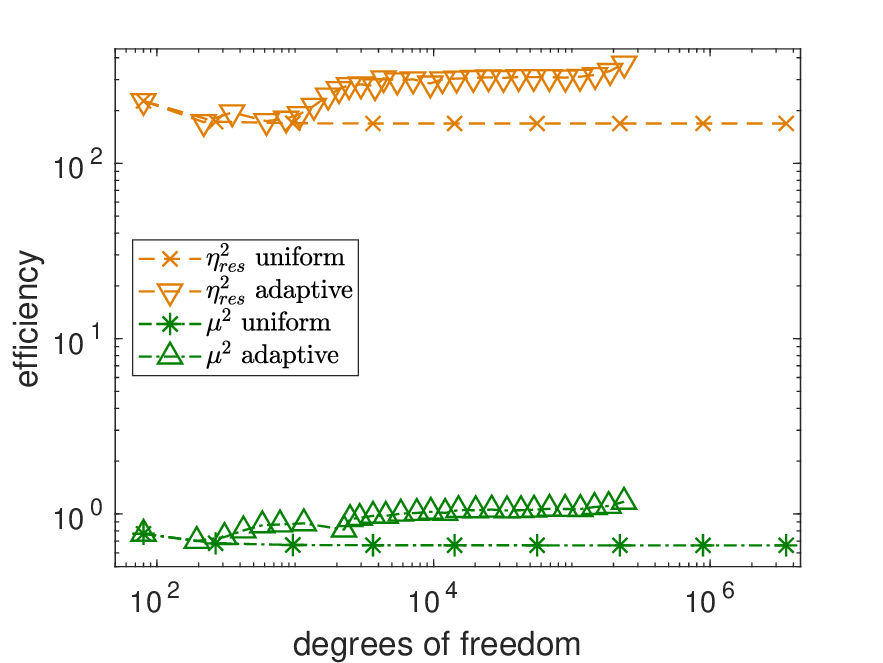}
            \caption{}
            \label{subfig: lshape neumann efficiency}
        \end{subfigure}
        \caption{(a) Error and estimator plot and (b) efficiency index of $\mulew^2$ compared to $\eta_{res}^2$ for the L-shaped domain with Neumann boundary using $\Pcal_3$-FE for the eigenvalue with index $m = 4$ with uniform and adaptive mesh refinement.}
        \label{fig: lshape neumann eff plot}
    \end{figure}

\subsection{Experiments on Cook's membrane}
 
    To show that mixed boundary conditions can be approximated by the estimator, we consider Cook's membrane with Dirichlet and Neumann boundary parts as shown in Figure \ref{fig: Cooks tikz domain} and compute the smallest eigenvalue $\kappa_1$. 
    
    Again we compare uniform and adaptive mesh refinement as well as the estimator $\mulew^2$ to the residual estimator $\etares^2$.
    
    \begin{figure}[tbp]
    	\centering
    	\begin{subfigure}[b]{0.3\textwidth}
    		\centering
\begin{tikzpicture}[scale = 0.04]
    \path (0,0) coordinate (A);
    \path (0,44) coordinate (B);
    \path (48,60) coordinate (C);
    \path (48,44) coordinate (D);
    
    \draw (D) -- (A) -- (B) -- (C);
    \draw[dotted] (B) -- (D);
    
    \foreach \y in {1,...,21}
    {
       \draw (0,\y*2) -- ++(-3,-3);
    }
    \draw (48,44) -- (48,60);
    
    \draw (-11,22) node[fill=white] {\scriptsize $\Gamma_D\!\!$};
    \draw (55,53) node {\scriptsize $\Gamma_N$};
    
    \draw[|-|] (0,-5) -- (48,-5);
    \draw (24,-10) node {\scriptsize 48};
    
    \draw[|-|] (-18,0) -- (-18,44);
    \draw[-|] (-18,44) -- (-18,60);
    \draw (-23,22) node[rotate=90] {\scriptsize 44};
    \draw (-23,52) node[rotate=90] {\scriptsize 16};

\end{tikzpicture}%
    		\caption{}
    	\end{subfigure}
    	\begin{subfigure}[b]{0.3\textwidth}
    		\centering
    		\includegraphics[width=1\textwidth]{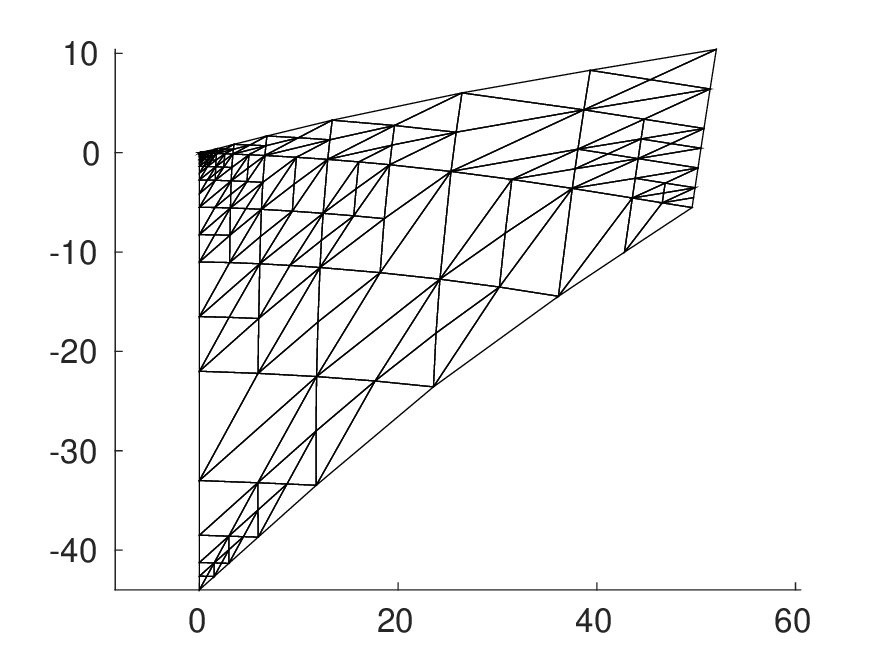}
    		\caption{}
    	\end{subfigure}
    	\begin{subfigure}[b]{0.32\textwidth}
    		\centering
    		\includegraphics[width=1\textwidth]{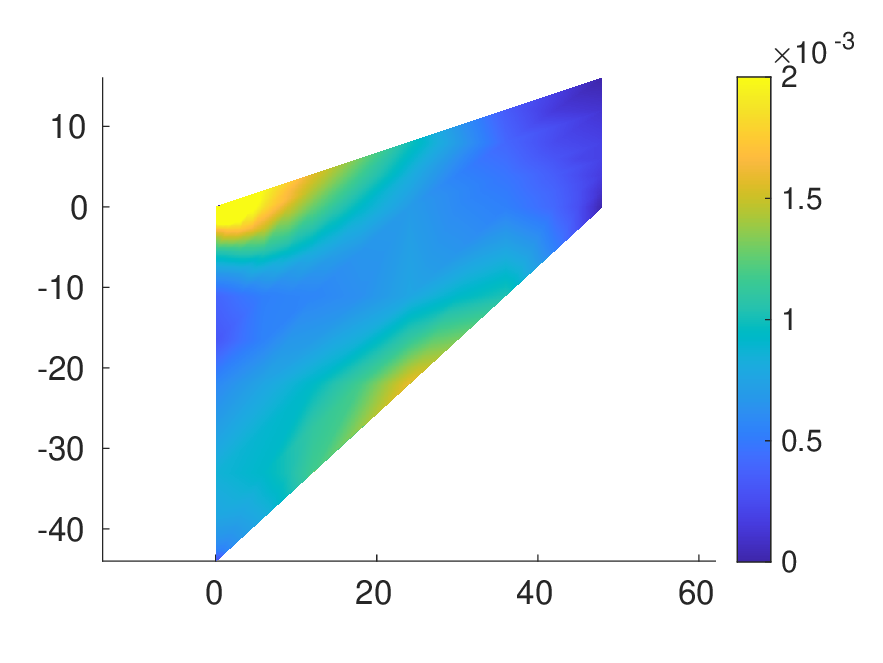}
    		\caption{}
    	\end{subfigure}
    	\caption{(a) Cooks membrane with Dirichlet boundary part $\GammaD$ and Neumann boundary part $\GammaN = \partial \Omega \setminus \GammaD$ with (b) displacement and (c) von Mises stress for $m=1$ using $\Pcal_4$-FE with adaptive mesh refinement. 
    	}
    	\label{fig: Cooks tikz domain}
    \end{figure}
    
    In Figure \ref{fig: cooks plot and eff}, we observe optimal convergence rates for adaptive, but not for uniform mesh refinement.
    As for the L-shaped domain, the efficiency index of $\mulew^2$ is closer to 1 compared to the efficiency of $\etares^2$ by two orders of magnitude.
    
    \begin{figure}[tbp]
        \centering
        \begin{subfigure}[b]{0.49\textwidth}
            \centering
            \includegraphics[width=1\textwidth]{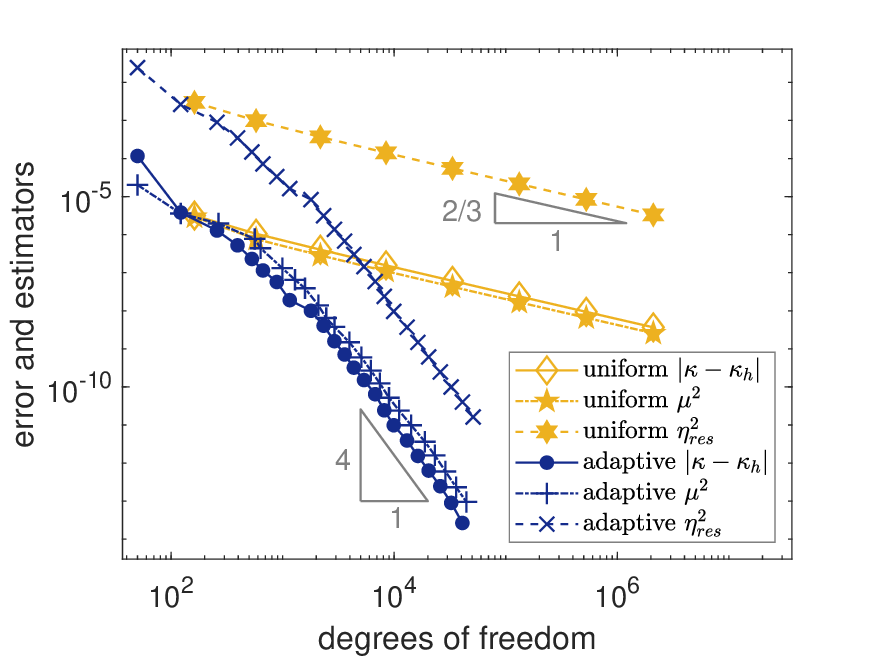}
            \label{subfig: cooks plot}
            \caption{}
        \end{subfigure}
        \begin{subfigure}[b]{0.49\textwidth}
            \centering
            \includegraphics[width=1\textwidth]{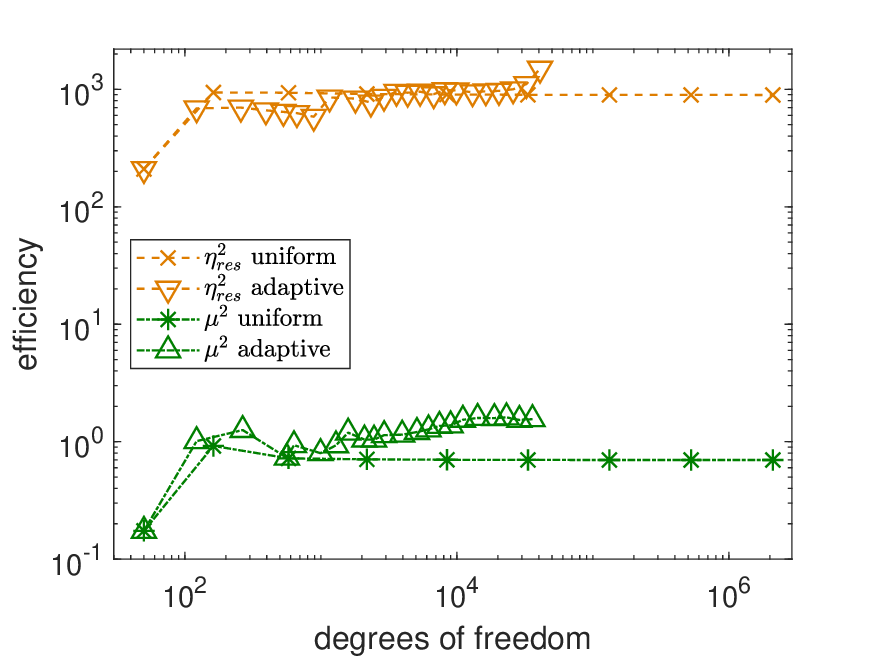}
            \label{subfig: cooks efficiency}
            \caption{}
        \end{subfigure}
        \caption{(a) Error and estimator plot and (b) efficiency index of $\mulew^2$ compared to $\eta_{res}^2$ for Cooks membrane with Dirichlet and Neumann boundary parts using $\Pcal_4$-FE with uniform versus adaptive refinement for $\kappa_1$.}
        \label{fig: cooks plot and eff}
    \end{figure}


\bibliographystyle{plain}
\bibliography{Referenzen}

\end{document}